\documentclass[11pt]{amsart}
\usepackage{amsfonts}
\usepackage{amssymb}
\usepackage{graphicx,color}

\usepackage{setspace}
\usepackage{amscd}
\usepackage{mathrsfs}
\newtheorem{theorem}{Theorem}[section]

\newtheorem{corollary}[theorem]{Corollary}
\newtheorem{definition}[theorem]{Definition}

\numberwithin{equation}{section}

\def\DO{\mathcal D}
\def\RE{\mathbb R}
\def\CO{{\mathbb C}}
\def\N{\mathbb N}

\def\C{\mathcal C}

\def\A{\mathcal A}


\begin{document}

\title[Euler--Bernoulli beam equation with singular coefficients]
{Vibration modes of the Euler--Bernoulli beam equation with singularities}

\author{Nuno Costa Dias}
\author{Cristina Jorge}
\author{Jo\~{a}o Nuno Prata}

 \maketitle

\begin{abstract}
We consider the time dependent Euler--Bernoulli beam equa-tion with discontinuous and singular coefficients. Using an extension of the H\"ormander product of distributions with non-intersecting singular supports [L. H\"ormander, The Analysis of Linear Partial Diffe\-rential Operators I, Springer-Verlag, 1983], we obtain an explicit formulation of the differential problem which is strictly defined within the space of Schwartz distributions. We determine the general structure of its separable solutions and prove existence, uniqueness and regularity results under quite general conditions. This formalism is used to study the dynamics of an Euler--Bernoulli beam model with discontinuous flexural stiffness and structural cracks. We consider the cases of simply supported and clamped-clamped boundary conditions and study the relation between the characteristic frequencies of the beam and the position, magnitude and structure of the singularities in the flexural stiffness. Our results are compared with some recent formulations of the same problem.

\end{abstract}

\keywords{{\bf Keywords:} Linear differential equations with distributional coefficients, Generalized solutions, Multiplicative products
of distributions, Euler--Bernoulli beam equation, Vibration modes}

\section{Introduction}

In this paper we consider the dynamic Euler--Bernoulli beam equation:
\begin{equation}\label{op}
 \partial_x^2 \left[ {a(x)  \partial_x^2 u(x,t)}\right] + b(x)\partial_t^2{u}(x,t) = 0, \quad x,\, t \in \RE 
\end{equation}
for coefficients $a$ and $b$ in the space of distributions $\A=\cup_{i=0}^{\infty} \partial_x^i[\C_p^\infty] \subset \DO'$, where $C_p^\infty$ is the space of piecewise smooth functions, $\DO'$ is the space of Schwartz distributions on $\RE$, and $\partial_x$, $\partial_t$ are distributional derivatives.

For general $a,b \in \A$, the equation (\ref{op}) is not well-defined if $u$ is non-smooth (because the terms on the left-hand side of \eqref{op} may involve products of non-regular distributions). In most cases, it does not display (non-trivial) smooth solutions either. Therefore, for $a,b \in \A$,  the classical formulation of (\ref{op}) is of limited interest.


 
 There are several formalisms that can be used to extend the domain of the differential expression (\ref{op}). Three interesting possibilities are the formulations in terms of Colombeau generalized functions \cite{Col84,CLNP89,HO09,Hor13}, the related approaches based on the sequential/very weak solutions formalism \cite{Blommaert,Garetto}, and the {\it intrinsic} formulations \cite{Cad09,Cad12,GRO16,HO07,Lep77}{\footnote{Hilbert space methods have also been widely used to study similar problems for Schr\"odinger operators with point interactions \cite{Albeverio1,Albeverio2,Dias3,Golovaty}.}}. The latter is defined within the space of Schwartz distributions, usually in terms of a multiplicative product defined in some subset of $\DO'$. 
 
 While the Colombeau and some sequential solutions formulations are more general, the intrinsic approaches are simpler to implement, but usually limited to particular cases. For instance, model products, which are the most general products in the hierarchy given by M. Oberguggenberger in [section 7, \cite{Obe92}],
have been used to extend the domain of ordinary differential equations (ODE)s with discontinuous coefficients \cite{HO07}. The resulting formula--tion is compatible with some
non-smooth solutions, but it is not well-defined in general for discontinuous ones.
  
In this paper we construct a new {\it intrinsic} formulation of (\ref{op}) by using the multiplicative product $*$ that was defined in \cite{DP08,DP09}. This product is an extension to $\A$ of the H\"ormander product of distributions with disjoint singular supports {[pag.55, \cite{Hor83}]}, and displays almost optimal properties. It is an associative, inner operation that extends the usual product of piecewise smooth functions, and satisfies the Leibniz rule with respect to the distribu--tional derivative. In fact, it is (essentially) the {\it unique} multiplicative product of distributions defined in some space $B \supseteq \A$ that satisfies all the conditions in the Schwartz impossibility result (cf. \cite{Sch54}) except for $B \supseteq \DO'$ \cite{DJP}.

The product $*$ has been used to obtain intrinsic formulations for a variety of differential problems with coefficients in $\A$ \cite{DPJ16,DJP19,DJP20,Dias3}. In general, it allows us to construct extensions of the original differential expressions which are well-defined in $\A$. For the case of (\ref{op}), we substitute the usual pointwise product functions by $*$ and obtain:
\begin{equation}\label{21}
\left[{a_0(x)* u''(x,t)+ u''(x,t)*a_1(x)}\right]'' + b_0(x) * \ddot{u}(x,t)+\ddot{u}(x,t)*b_1(x)=0,
\end{equation}
where $x,t \in \RE$, $a_0,a_1,b_0,b_1 \in \A$ and, as usual, $u''=\partial_x^2 u$,  $\ddot{u}=\partial_t^2 u$. 
The new equation (\ref{21}) has twice the coefficients of (\ref{op}) because the product $*$ is non-commutative. If $a_i,b_i$, $i=0,1$ are smooth then (\ref{21}) reduces to (\ref{op}) with $a=a_0+a_1$ and $b=b_0+b_1$. Hence, (\ref{21}) is an extension of (\ref{op}) that is well-defined for the case where both the coefficients and the prospective solutions are distributions in $\A$ (which includes the cases of piecewise smooth functions, the Dirac delta and all its derivatives). 
 
The formulation (\ref{21}) displays the following key features. First it is based on a sound and self-contained mathematical structure (an associative differential algebra of distributions where differential equations can be defined). Second it is quite general, allowing for a unified treatment of many different beam models. Lastly, it can be reformulated in terms of a    
 "{\it limit differential equation}". 
This result, proved in \cite{DJP20} for a similar case, states that
(\ref{21}) is equivalent to (all) the equations of the form 
\begin{equation}\label{LDE1}
\lim_{\epsilon \to 0^+}\left[\left(A_\epsilon(x) u''(x,t) \right)''+ B_\epsilon(x) \ddot{u}(x,t)\right]=0
\end{equation}
where the limit is taken in the distributional sense, and 
$$
A_\epsilon(x)=a_{0\epsilon}(x-\epsilon) + a_{1\epsilon}(x+\epsilon) \quad , \quad B_\epsilon(x)=b_{0\epsilon}(x-\epsilon) + b_{1\epsilon}(x+\epsilon)~,
$$
where $a_{i\epsilon},b_{i\epsilon}$, $i=0,1$, $\epsilon >0$ belong to a (quite general) class of one-parameter families of smooth functions with distributional limits $a_{i\epsilon} \to a_i$ and $b_{i\epsilon} \to b_i$ as $\epsilon \to 0^+$ (cf. Corollary 3.2, \cite{DJP20}). The reformulation (\ref{LDE1}) of (\ref{21}) is useful to clarify the role played by the product $*$ and by the "left" and "right" coefficients ($a_0,b_0$ and $a_1,b_1$, respectively) in (\ref{21}). We also note that (\ref{LDE1}) displays an interesting resemblance, but it is not equivalent to, the sequential/very weak solution formulations of \cite{Blommaert,Garetto}.

In the first part of the paper (section 3) we discuss the general properties of (\ref{21}). 
We prove existence and uniqueness results for its separable solutions, and study the conditions for which they are regular and singular. 
We also determine the general structure of these solutions, including the explicit form of the interface conditions at the points where they are non-smooth. 

\vspace{0.3cm}

The previous formalism can be used to study the dynamics of Euler--Bernoulli beam (EBB) models with discontinuous flexural stiffness and/or concentrated cracks (see \cite{Ata97} for the mechanical background). The intrinsic formulations of these systems are typically beset by some interesting mathe--matical problems, mainly related to the existence of singularities (cf. 
\cite{BC07,Cad09,Dim96,Gla04,HO07,YS01,YSR01}). In section 4 we use the general results of section 3 to study the following (formal) EBB model:
\begin{equation}\label{o}
 \left[ {a(x)  u''(x,t)}\right]'' + m\ddot{u}(x,t) = 0,\quad x \in \left[ {0,1}  \right], \quad t \geq 0
\end{equation}
satisfying pinned-pinned (PP) boundary conditions:
\begin{equation}\label{nnn}
u(0,t)=u''(0,t)=0 ~, ~u(1,t)=u''(1,t)=0 , \quad t \geq 0
\end{equation}
or clamped-clamped (CC) boundary conditions:
\begin{equation}\label{CCbc}
u(0,t)=u'(0,t)=0 ~ , ~ u(1,t)=u'(1,t)=0 , \quad t \geq 0 \,.
\end{equation}
Here:
\begin{itemize}

\item $u(x,t)$ is the transverse displacement of the beam (at the point $x$ and time $t$), 

\item $a(x)$ denotes the flexural stiffness, which is given
by $a=EI$ where $E$ is the modulus of elasticity and $I$ is the moment of inertia. We consider $a$ to be of the form:
\begin{equation} \label{lli}
a(x)  =  A( H (\xi_0-x) -\lambda_0 \delta_{\xi_0})+ 
 kA( H (x-\xi_0) -\lambda_1\delta_{\xi_0}) 
\end{equation}
where $H$ is the Heaviside step function, $A>0$ and $kA>0$ are the flexural stiffness constants in the sections $\left[0,\xi_0\right)$ and $\left(\xi_0,1\right]$, respectively ($\xi_0$ is the junction point; if $k=1$ the beam is homogeneous). Following \cite{Cad09}, the concentrated cracks are modeled by Dirac deltas in the flexural stiffness. Since we are considering the (more) general case where the crack and the junction point may be located at the same position, the flexural stiffness displays two parameters, $\lambda_0$ and $\lambda_1$,  which are related to the intensity of the crack at the left and right sides of the junction point $\xi_0$. 

\item $m$ is the mass per unit length of the beam, which we assume to be constant.
\end{itemize}

The formal model (\ref{o},\,\ref{lli}) describes a beam with a junction point (if $k \not=1$) and/or a structural crack (if $\lambda_0\not=0$ or $\lambda_1\not=0$) both located at $\xi_0$. 
In section 4 we re-write (\ref{o},\,\ref{lli}) in the form (\ref{21}), and obtain an explicit formulation  of the EBB equation which is well-defined for arbitrary coefficients $a_0,a_1$ of the form (\ref{lli}), and reduces to (\ref{o}) exactly if the coefficients are smooth.

 Using this formulation, we investigate the effect of the junction points and concentrated cracks on the characteristic frequencies of the EBB (a problem with important applications in engineering \cite{Dim96,Gla04,Mor93,Sci54}). We also determine the general structure of the vibration modes, including the specific interface conditions at the junction point, and present numerical results for the relation between the characteristic frequencies and the crack position, intensity and structure. Notably, our numerical results are quite similar to those obtained in \cite{Cad09} using a different formalism, and which are consistent with some experimental data (cf. \cite{Cad09}).
    
To conclude, let us point out that the formalism presented in this paper can be easily used/extended to study many other beam models. These include the cases of arbitrary finite number of cracks/junction points, external distributional forces, and/or distributional mass densities. We can also consider the cases of more general distributional flexural stiffness (e.g. displa--ying higher order derivatives of the Dirac delta), other types of beams (e.g. Timoshenko, Rayleigh, Vlasov) and other  boundary conditions (e.g. classic - simply clamped, supported, sliding; and non-classical - elastic, inertial, dissipative) \cite{Jun12, Kla20,Mau93,Sil16,Vaz16}.

 \vspace{0.3cm}

\noindent\textbf{Notation}. Let $\Omega$ and ${\overline\Omega}$ denote an arbitrary open interval of $\RE$ and its closure, respectively. The functional spaces are denoted by calligraphic capital letters, ($\A(\Omega)$, $\C^\infty(\Omega)$, $\DO(\Omega)$,....). If $\Omega = \RE$ we write only $\A$, $\C^\infty$, $\DO$,... In general, we do not distinguish a locally integrable function from the associated regular distribution (the only exception is in Definition \ref{78}, where we write $\phi_{\DO'}$ to denote the regular distribution associated to the smooth function $\phi$). As usual, $H$ is the Heaviside step function, $\chi_{\Omega}$ is the characteristic function of $\Omega$, and $\delta_{x_0}$ is the Dirac measure with support at $x_0$. If $x_0=0$ we write only $\delta$. The derivatives are always considered in the distributional sense. We use the standard notations: $u'(x,t)=\partial_x u(x,t)$, $\dot{u}(x,t)=\partial_t u(x,t)$ and $\psi^{(n)}(x)=\partial_x^n\psi(x)$.

\section {Schwartz Distributions and ODEs with distributional coefficients}

In this section we review some basic notions about Schwartz distributions and present the main properties of the multiplicative product $*$ that was proposed in \cite{DP09}. We also review the formulation and some relevant properties of the ODEs of the form 
\begin{equation} \label{eq11}
\sum\limits_{i = 0}^n  \left(a_i(x) * \psi^{(i)} (x)+  \psi^{(i)} (x)* b_i(x)\right)= f(x)
\end{equation}
where $a_i, b_i \in \A$, $f \in \C^{\infty}$. The equations of this form will be important for our formulation of the EBB equation. For details and proofs the reader should refer to \cite {DJP20}.  

\subsection {A multiplicative product of Schwartz distributions}

The space of test functions $\DO(\Omega)$ is the set of all functions which are smooth and have support on a compact subset of $\Omega$. Let $\DO'(\Omega)$ be its dual, the space of Schwartz distributions. If $\Omega =\RE$, we write simply $\DO'$. Let $F|_{\Omega}$ denote the restriction of $F \in\DO'$ to the space $\DO(\Omega)$. We have, of course, $F|_{\Omega} \in \DO'(\Omega)$. 

The support of $F \in \DO'$, denoted by supp $F$, is defined as the complement of the largest open set where $F$ vanishes, and the singular support of $F \in \DO'$, denoted by sing supp $F$, is the complement of the largest open set where $F$ is a smooth function.

 The distribution $F \in \DO'$ is of order $n$ (and we write $n=$ ord $F$) iff $F$ is the $n$th order
distributional derivative (but not a lower order distributional
derivative) of a regular distribution  [pag.43, \cite{Kan98}]. A distribution of order
zero is a regular distribution. 

Finally, let $\C_p^{\infty}$ be the space of piecewise smooth functions on $\RE$, i.e. the space of functions which are smooth, except on a finite set of points, where they and all their derivatives have finite left and right limits. 

A distributional extension of the space $\C_p^{\infty}$ is given by:

\begin{definition}
Let $\A$ be the space of all functions in $\C_p^{\infty}$ - regarded as Schwartz distributions - together with
all their distributional derivatives to all orders. Moreover, for $\Omega \subset \RE$ an open set, the space of
distributions of the form $F|_{\Omega}$, where $F \in \A$, is denoted by $\A(\Omega)$.
\end{definition}

Notice that $\C_p^{\infty} \subset \A \subset \DO'$. The elements of $\A$ are distributions with finite singular
support. They can be written in the form $F=\Delta_F +f$, where $\Delta_F$ is a distribution with finite support
(and thus a finite linear combination of Dirac deltas and their derivatives) and $f \in \C_p^{\infty}$.  The explicit form of a distribution in $\A$ is given in the following

\begin{theorem}
$F \in \A$ iff there is a finite set $I=\{x_1,...,x_m\}$ (where $x_i<x_k$ for $i<k$) associated with a set of
open intervals $\Omega_i=(x_i,x_{i+1})$, $i=0,..,m$ (where $x_0=-\infty$ and $x_{m+1}=+\infty$) such that
($\chi_{\Omega_i}$ is the characteristic function of $\Omega_i$):
\begin{equation}\label{FormF}
F= \sum_{i=1}^m \sum_{j=0}^n c_{ij}\delta^{(j)}_{x_i}+ \sum_{i=0}^m f_i \chi_{\Omega_i}
\end{equation}
for some $c_{ij} \in \CO$ and $f_i \in \C^{\infty}$. The singular support of $F$ is, of course, a subset of $I$.
\end{theorem}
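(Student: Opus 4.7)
The plan is to establish the equivalence by a straightforward induction, proving the easy direction directly from the structure of $\A$ and the hard direction by induction on the distributional order.

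For the direction $(\Leftarrow)$, I would first note that each summand on the right-hand side of \eqref{FormF} lies in $\A$: the piecewise smooth part $\sum_{i=0}^m f_i \chi_{\Omega_i}$ is already in $\C_p^{\infty} \subset \A$, while each $\delta_{x_i}^{(j)}$ is the $(j+1)$-th distributional derivative of the shifted Heaviside $H(\cdot - x_i) \in \C_p^{\infty}$. Then I would check that $\A$ is closed under finite sums. Indeed, if $F_k = g_k^{(n_k)}$ with $g_k \in \C_p^{\infty}$, setting $N = \max_k n_k$ and using that $\C_p^{\infty}$ is closed under antidifferentiation, one can find $h_k \in \C_p^{\infty}$ with $h_k^{(N)} = F_k$, so that $\sum_k F_k = \left(\sum_k h_k\right)^{(N)} \in \A$. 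Hence any expression of the form \eqref{FormF} belongs to $\A$.

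For the direction $(\Rightarrow)$, I would induct on the smallest $n \ge 0$ such that $F = g^{(n)}$ for some $g \in \C_p^{\infty}$. The base case $n=0$ is immediate: if $I = \{x_1,\dots,x_m\}$ is the (finite) singular support of $F \in \C_p^{\infty}$, then on each $\Omega_i$ the restriction $F|_{\Omega_i}$ admits a smooth extension $f_i \in \C^{\infty}(\RE)$ (because all one-sided derivatives at the $x_i$ are finite), yielding $F = \sum_{i=0}^m f_i \chi_{\Omega_i}$, which is of the form \eqref{FormF} with $c_{ij}=0$. For the inductive step, assume every element of $\A$ of order $\le n$ has the desired form, let $F \in \A$ have order $n+1$, and write $F = G'$ for some $G \in \A$ of order $n$. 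By hypothesis $G$ has the form \eqref{FormF}. Differentiating term-by-term, $\sum_{i,j} c_{ij} \delta_{x_i}^{(j)}$ becomes $\sum_{i,j} c_{ij} \delta_{x_i}^{(j+1)}$, still a finite linear combination of Dirac derivatives supported on $I$, while the piecewise smooth part transforms via the elementary computation
\[
(f_i \chi_{\Omega_i})' = f_i' \chi_{\Omega_i} + f_i(x_i)\,\delta_{x_i} - f_i(x_{i+1})\,\delta_{x_{i+1}},
\]
where the contributions at $x_0=-\infty$ and $x_{m+1}=+\infty$ are absent. Summing over $i$ produces a new piecewise smooth part $\sum_i f_i' \chi_{\Omega_i}$ together with extra Dirac masses at the points of $I$, so $F$ again has the form \eqref{FormF}. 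The last assertion on $\text{sing supp}\,F \subset I$ is immediate, since on $\RE \setminus I$ the distribution $F$ agrees with one of the smooth $f_i$.

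There is no single step I expect to be hard; the only delicate point is bookkeeping, specifically ensuring that (i) the set $I$ remains finite and in fact does not grow under differentiation (which is clear because the new deltas arising in the inductive step are supported at endpoints already in $I$), and (ii) the common upper index $n$ in the double sum can be chosen uniformly for all $x_i$, which follows automatically since the induction raises that index by one at each step.
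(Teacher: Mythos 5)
Your proof is correct, and it follows the standard route: the $(\Leftarrow)$ direction via closure of $\A$ under finite sums (using antidifferentiation within $\C_p^{\infty}$), and the $(\Rightarrow)$ direction by induction on the order, using the one-dimensional Whitney extension for the base case and the identity $(f_i \chi_{\Omega_i})' = f_i' \chi_{\Omega_i} + f_i(x_i)\delta_{x_i} - f_i(x_{i+1})\delta_{x_{i+1}}$ for the inductive step. The paper itself states this theorem without proof, deferring to its references, so there is nothing to contrast with; the only steps worth a sentence of justification in a written version are the smooth extendability of $F|_{\Omega_i}$ to all of $\RE$ (Whitney) and the fact that the distributional derivative of the antiderivative of a piecewise smooth function recovers that function, both of which you use implicitly and both of which hold.
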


The dual product of a distribution $F \in \DO'(\Xi)$ by a smooth function $g \in
\C^\infty(\Xi)$, where $\Xi\subseteq \RE$, is defined by:
\begin{equation} \label{dual}
\langle F g, t \rangle= \langle F, gt \rangle \quad , \quad \forall \,t \in \DO (\Xi)\, .
\end{equation}

The H\"ormander product of distributions with non-intersecting singular supports, introduced by L. H\"ormander in (pag.55, \cite{Hor83}), generalizes the dual product, and admits the following restriction to the space $\A$:

\begin{definition}
Let $F,G \in \A$  be two distributions with disjoint singular supports. Then there exists a finite open cover of
$\RE$ (denote it by $\{\Xi_i \subset \RE,\, i=1,..,d \}$) such that, on each open set $\Xi_i$, either $F$ or $G$
is a smooth function. The H\"ormander product of $F$ by $G$ is then defined as the unique distribution $F
\cdot G \in \A$ that satisfies:
$$
\left(F \cdot G\right)|_{\Xi_i}= F|_{\Xi_i} G|_{\Xi_i} \quad , \quad  i=1,..,d.$$
where $F|_{\Xi_i} G|_{\Xi_i}$ denotes the dual product of a distribution by a smooth function.
\end{definition}

Finally, the product $*$ extends the H\"ormander product to the case of an arbitrary pair of distributions in $\A $: 

\begin{definition}\label{2.4}
Let $F,G \in \A$.
The multiplicative product $*$ is defined by
\begin{equation} \label{prod}
F * G= \lim_{\epsilon \downarrow 0} F(x) \cdot G(x+\epsilon),
\end{equation}
where the product in $F(x) \cdot G(x+\epsilon)$ is the H\"ormander product (which is well-defined for sufficiently small $\epsilon >0$) and the limit is taken in the distributional sense.

If $F,G$ are generalized functions of the position $x$ and time $t$ (as in eq.(\ref{21})), such that $F(\cdot,t), G(\cdot,t) \in \A$ for all $t$, then (\ref{prod}) extends trivially:
$$
F * G= \lim_{\epsilon \downarrow 0} F(x,t) \cdot G(x+\epsilon,t)~.
$$
\end{definition}

The explicit form of $F*G$ is given by:

\begin{theorem}\label{2.5}
 Let $F,G \in \A$ and let $I_F$ and $I_G$ be the singular supports of $F$ and $G$, respectively. Let
$I=I_F \cup I_G=\{x_1 <...<x_m\}$. Let
$\Omega_i=(x_i,x_{i+1})$, $i=0,..,m$ (with $x_0=-\infty$ and $x_{m+1}=+\infty$). The distributions 
$F$ and $G$ can be written in the form (cf. Theorem 2.2):
\begin{eqnarray}\label{1}
F &=& \sum_{i=1}^m \sum_{j=0}^n a_{ij}\delta^{(j)}_{x_i} + \sum_{i=0}^m f_i \chi_{\Omega_i} \nonumber \\
G &=& \sum_{i=1}^m \sum_{j=0}^n b_{ij}\delta^{(j)}_{x_i} + \sum_{i=0}^m g_i \chi_{\Omega_i}
\end{eqnarray}
for some $n \in \N_0$ and $f_i,g_i \in \C^\infty$. Notice that $a_{ij}=0$ if $x_i \notin I_F$ or if $j \ge$ ord $F$, and likewise for
$b_{ij}$.
In this notation $F*G$ is given explicitly by
\begin{equation} \label{prodf}
F * G = \sum_{i=1}^m \sum_{j=0}^n \left[ a_{ij} g_i  + b_{ij} f_{i-1} \right] \cdot \delta^{(j)}_{x_i} +
\sum_{i=0}^m f_i g_i \chi_{\Omega_i}.
\end{equation}
and $F*G \in \A$.
\end{theorem}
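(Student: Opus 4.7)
The plan is to apply Definition \ref{2.4} directly: compute the Hörmander product $F(x) \cdot G(x+\epsilon)$ for small $\epsilon > 0$ and then pass to the distributional limit $\epsilon \downarrow 0$. The crucial preliminary observation is that translation by $-\epsilon$ shifts the singular support of $G$ from $I_G$ to $I_G - \epsilon$, so for all sufficiently small $\epsilon > 0$ the sets $I_F$ and $I_G - \epsilon$ are disjoint (take $\epsilon$ smaller than the minimal positive value of $x_k - x_j$ for $x_k \in I_G$, $x_j \in I_F$), hence the Hörmander product $F \cdot G(\cdot+\epsilon)$ is well-defined.

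I would next decompose $F = \Delta_F + f$ and $G = \Delta_G + g$, where $\Delta_F = \sum_{i,j} a_{ij}\delta^{(j)}_{x_i}$, $\Delta_G = \sum_{i,j} b_{ij}\delta^{(j)}_{x_i}$, $f = \sum_i f_i \chi_{\Omega_i}$, $g = \sum_i g_i \chi_{\Omega_i}$, and expand
\begin{equation*}
F(x) \cdot G(x+\epsilon) = \Delta_F \cdot \Delta_G(\cdot+\epsilon) + \Delta_F \cdot g(\cdot+\epsilon) + f \cdot \Delta_G(\cdot+\epsilon) + f \cdot g(\cdot+\epsilon).
\end{equation*}
The first term vanishes: on any small neighborhood of $x_i \in I_F$ the factor $\Delta_G(\cdot+\epsilon)$ is identically zero, and symmetrically near any $x_i - \epsilon \in I_G - \epsilon$ the factor $\Delta_F$ vanishes, so all restrictions entering the Hörmander product are zero. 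For the second term, on a small neighborhood of each $x_i$ the function $g(\cdot+\epsilon)$ is smooth and equals $g_i(\cdot+\epsilon)$ (because $x_i + \epsilon \in \Omega_i$ for $\epsilon$ small), so the restriction formula gives $\Delta_F \cdot g(\cdot+\epsilon) = \sum_{i,j} a_{ij}\, g_i(\cdot+\epsilon) \cdot \delta^{(j)}_{x_i}$. Symmetrically, on a small neighborhood of each $x_i - \epsilon$ the function $f$ is smooth and equals $f_{i-1}$ (because $x_i - \epsilon \in \Omega_{i-1}$), so $f \cdot \Delta_G(\cdot+\epsilon) = \sum_{i,j} b_{ij}\, f_{i-1} \cdot \delta^{(j)}_{x_i-\epsilon}$. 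The fourth term is the ordinary pointwise product of two piecewise-smooth functions.

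Finally, I would pass to the limit $\epsilon \downarrow 0$. For the delta contributions, continuity of the dual product in its smooth factor together with $\delta^{(j)}_{x_i - \epsilon} \to \delta^{(j)}_{x_i}$ in $\DO'$ yields the combined coefficient $a_{ij} g_i + b_{ij} f_{i-1}$ of $\delta^{(j)}_{x_i}$. For the piecewise-smooth term $f(x) g(x+\epsilon)$, on each $\Omega_i$ away from the sliver $(x_{i+1}-\epsilon, x_{i+1})$ it agrees with $f_i(x) g_i(x+\epsilon)$, which converges in $\DO'$ to $f_i g_i \chi_{\Omega_i}$ by locally uniform convergence; the sliver contribution $f_i(x) g_{i+1}(x+\epsilon) \chi_{(x_{i+1}-\epsilon, x_{i+1})}(x)$ vanishes in $\DO'$ since, paired with any $\varphi \in \DO$, it is bounded in absolute value by $\epsilon \cdot \sup |f_i g_{i+1} \varphi|$. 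Summing the surviving contributions recovers (\ref{prodf}), and the right-hand side visibly lies in $\A$.

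The main obstacle is conceptual rather than technical: one has to carefully track the asymmetry by which $\delta^{(j)}_{x_i}$ receives the factor $g_i$ (the right-hand smooth piece of $G$ at $x_i$) but $f_{i-1}$ (the left-hand smooth piece of $F$ at $x_i$), a direct consequence of the sign of the shift $+\epsilon$ in Definition \ref{2.4}; the opposite choice of sign would interchange the roles of the smooth pieces. The accompanying analytic point — vanishing of the shrinking-sliver cross-terms — is routine given the local boundedness of the piecewise-smooth data.
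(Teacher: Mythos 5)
Your argument is correct. The paper itself states Theorem \ref{2.5} without proof (deferring to the cited references), but your direct verification from Definition \ref{2.4} --- splitting $F$ and $G$ into their singular and piecewise-smooth parts, computing the four H\"ormander products of $F(x)$ with $G(x+\epsilon)$ for small $\epsilon>0$, and passing to the distributional limit, including the vanishing of the shrinking-sliver cross-terms --- is exactly the computation that establishes (\ref{prodf}), and it correctly accounts for the asymmetry by which $\delta^{(j)}_{x_i}$ picks up $a_{ij}g_i + b_{ij}f_{i-1}$ as a consequence of the sign of the shift.
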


Some simple results that follow from (\ref{prodf}) are ($x_0,x_1 \in \RE$ and $i,j \in \N_0$):
$$
H(x_0-x) * \delta^{(i)}_{x_0}= \delta^{(i)}_{x_0}* H(x-x_0)=\delta^{(i)}_{x_0}
$$
\begin{equation} \label{prods}
H(x-x_0) * \delta^{(i)}_{x_0}= \delta^{(i)}_{x_0}* H(x_0-x)=0 ~.
\end{equation}
$$
\delta^{(i)}_{x_0} *\delta^{(j)}_{x_1}=0
$$
The main properties of $*$ are summarized in the following Theorem:
\begin{theorem}
The product $*$ is an inner operation, it is associative, distribu--tive, non-commutative and it
reproduces the H\"ormander product for distribu--tions with disjoint singular supports. The distributional derivative satisfies the Leibniz rule with respect to the product $*$.
\end{theorem}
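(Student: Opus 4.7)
The plan is to take the explicit formula \eqref{prodf} from Theorem \ref{2.5} as the main tool and read off each of the listed properties directly. That $F*G \in \A$ is immediate, since \eqref{prodf} expresses $F*G$ as a finite linear combination of Dirac derivatives plus a piecewise smooth function, so $*$ is an inner operation on $\A$. Both left and right distributivity follow at once because the coefficients $a_{ij}g_i + b_{ij}f_{i-1}$ and $f_ig_i$ in \eqref{prodf} are separately linear in the canonical data of $F$ and of $G$. Non-commutativity is witnessed by the examples already recorded in \eqref{prods}, where $H(x_0-x)*\delta_{x_0}=\delta_{x_0}$ but $\delta_{x_0}*H(x_0-x)=0$. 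If $I_F \cap I_G = \emptyset$, then at each $x_i$ one of $a_{ij},b_{ij}$ vanishes and the surviving term $a_{ij}g_i\delta^{(j)}_{x_i}$ (or $b_{ij}f_{i-1}\delta^{(j)}_{x_i}$) is precisely the dual product of the singular factor with the smooth factor on a neighbourhood of $x_i$, which is the H\"ormander product on that neighbourhood.

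For associativity, I would fix $F, G, H \in \A$, pass to the common partition $I = I_F \cup I_G \cup I_H$, and expand all three distributions in the canonical form \eqref{1} over $I$. A double application of \eqref{prodf} shows that both $(F*G)*H$ and $F*(G*H)$ have $\chi_{\Omega_i}$-component $f_ig_ih_i$, and that the smooth coefficient multiplying $\delta^{(j)}_{x_i}$ via the dual product equals $a_{ij}g_ih_i + b_{ij}f_{i-1}h_i + c_{ij}f_{i-1}g_{i-1}$ in both iteration orders. The asymmetry of \eqref{prodf} is essential here: $F$ always sees the right-limits $g_i, h_i$ while $H$ always sees the left-limits $f_{i-1}, g_{i-1}$, which is precisely what makes the two orderings agree.

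The Leibniz rule is the only step requiring genuine bookkeeping and is the main obstacle. One differentiates \eqref{prodf} term by term: the smooth block $f_ig_i\chi_{\Omega_i}$ produces $(f_ig_i)'\chi_{\Omega_i}$ together with boundary Dirac contributions $(f_ig_i)(x_i^+)\delta_{x_i} - (f_ig_i)(x_{i+1}^-)\delta_{x_{i+1}}$, while the singular block differentiates by the ordinary Leibniz rule for a smooth function times a distribution. On the right-hand side, one applies \eqref{prodf} to the canonical forms of $F'$ and $G'$, noting that $F'$ picks up new $\delta_{x_i}$ contributions with coefficient $f_i(x_i^+)-f_{i-1}(x_i^-)$ from differentiating the $\chi_{\Omega_i}$-blocks of $F$, and analogously for $G'$. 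Matching the $\delta_{x_i}$-coefficients on the two sides then reduces to the elementary identity
\[
f_i(x_i^+)g_i(x_i^+) - f_{i-1}(x_i^-)g_{i-1}(x_i^-) = \bigl[f_i(x_i^+)-f_{i-1}(x_i^-)\bigr]g_i(x_i^+) + f_{i-1}(x_i^-)\bigl[g_i(x_i^+)-g_{i-1}(x_i^-)\bigr],
\]
together with its analogues for the higher-order $\delta^{(j)}_{x_i}$ coefficients obtained from the dual-product Leibniz expansion. This identity encodes exactly the left/right pairing built into \eqref{prodf} and, once verified at every singular point, yields $(F*G)' = F'*G + F*G'$ coefficient by coefficient, completing the proof.
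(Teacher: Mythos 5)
Your proposal is essentially correct, but it cannot be compared line-by-line with the paper, because the paper does not prove this theorem at all: it is stated as a summary of results whose proofs are deferred to the references \cite{DP08,DP09,DJP20}. What you supply is therefore a self-contained verification directly from the explicit formula \eqref{prodf}, and the individual steps hold up: the inner-operation and distributivity claims are immediate once one notes that the representation \eqref{1} is stable under refinement of the partition $I$; non-commutativity is indeed witnessed by \eqref{prods}; and your associativity computation is the genuine content --- the coefficient $a_{ij}g_ih_i+b_{ij}f_{i-1}h_i+c_{ij}f_{i-1}g_{i-1}$ does come out the same in both bracketings precisely because the leftmost factor always contributes its right-limits and the rightmost its left-limits, as you say (one should add the remark that the dual product of a smooth function by $\delta^{(j)}_{x_i}$ is associative and commutative over smooth multipliers, so the nested dual products can be regrouped). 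Two points deserve tightening. First, in the H\"ormander-reproduction step you need the observation that when $x_i\notin I_G$ the one-sided piece $g_i$ has the same jet at $x_i$ as the two-sided smooth representative of $G$ near $x_i$, so that $g_i\cdot\delta^{(j)}_{x_i}$ really equals the dual product appearing in the H\"ormander definition. Second, your Leibniz argument, while workable, is the hard way round: since $*$ is defined in \eqref{prod} as the distributional limit of the translated H\"ormander products $F(x)\cdot G(x+\epsilon)$, and the H\"ormander product satisfies the Leibniz rule while distributional differentiation commutes with distributional limits, one gets $(F*G)'=F'*G+F*G'$ in two lines, with no jump-identity bookkeeping at all. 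Your coefficient-matching identity at the $\delta_{x_i}$ level is correct, but the higher-order analogues you wave at are exactly where such a direct computation becomes error-prone, so the limit argument is the safer route.
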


We conclude that the space $\A$, endowed with the product $*$, is
an associative (but noncommutative) differential algebra of
distributions. In fact, it is the (essentially) unique algebra that satisfies all the properties in the Schwartz impossibility result \cite{DJP,Sch54} with the exception of $\A \supseteq \DO'$ (we have, instead, $\A \subsetneq \DO'$).

\subsection {Linear differential equations with distributional coefficients}

Consider the ODE (\ref{eq11}) with coefficients $a_i,b_i \in \A$, $f \in \C^{\infty}$ and boundary conditions: 
\begin{equation}\label{IC}
\overline{\psi(x_0)} =\overline{C}
\end{equation}
where $x_0$ is a {\it regular} point of (\ref{eq11}) (see below), and
$$
\overline{\psi(x_0)} =(\psi(x_0),...,\psi^{(n-1)}(x_0))^T \quad , \quad \overline{C}=(C_1,...,C_n)^T \in \CO^n \, 
$$
where the superscript T denotes transposition.

The set
$$
I = \cup_{i=0}^n ~\left({\rm sing ~supp}~ a_i  \cup ~{\rm sing ~supp} ~b_i \right)
$$
is called the {\it singular set} of (\ref{eq11}).
A point $x_0 \in I$ is a {\it singular point} of (\ref{eq11}); otherwise it is a {\it regular point}. An interval is a {\it regular interval} of (\ref{eq11}) iff all its points are regular. 

\begin{definition}:\label{78}

\begin{itemize}
    \item [(A1)]  $\psi$ is a solution of the ODE (\ref{eq11}) iff $\psi \in \A$ and $\psi$ satisfies (\ref{eq11}) in the distributional sense.

   \item [(A2)]  $\psi$ satisfies the boundary conditions (\ref{IC}) at a regular point $x_0$ iff there is a open interval $\Omega \ni x_0$, and a function $\phi \in \C^{\infty}$ such that (i) $\psi=\phi_{\DO'}$ on $\Omega$ and (ii) $\phi^{(i)}(x_0)= C_{i+1}$, $i=0,...,n-1$.

   \item [(A3)] If $\psi$  satisfies (A1) and (A2), then $\psi$ is a solution of the boundary value problem (BVP) (\ref{eq11},\ref{IC}).
\end{itemize}
\end{definition}

We will assume that  (\ref{eq11}) satisfies the following property:

\begin{definition} Sectionally Regular ODE. \label{Re}

The ODE (\ref{eq11}) is said to be sectionally regular iff for every open regular interval $\Omega$ of
(\ref{eq11}), arbitrary $x_0\in \overline{\Omega}$ and $\overline{C}\in \CO^n$, the BVP (let $a_{i{\overline\Omega} },b_{i{\overline\Omega} }$ denote $\C^{\infty}(\overline{\Omega})$-extensions of the restrictions $a_{i}|_\Omega,b_{i}|_\Omega$)
\begin{equation}\label{CODE}
\sum\limits_{i = 0}^n (a_{i{\overline\Omega}}+b_{i{\overline\Omega}}) \psi^{(i)}_{\overline\Omega}= f|_{\overline{\Omega}} \quad , \quad
\overline{\psi_{\overline\Omega} (x_0)}=\overline{C}
\end{equation}
has a unique solution $\psi_{\overline\Omega} \in \C^{\infty}(\overline{\Omega})$.
\end{definition}

The next Theorem provides sufficient conditions for an ODE to be section--ally regular.

\begin{theorem}\label{SecReg}
Consider the ODE (\ref{eq11}) with coefficients $a_i,b_i \in  \A$ , $i = 0,...,n$ such that,
    for every open regular interval $\Omega$,
$$
a_{n\overline\Omega}(x) + b_{n\overline\Omega}(x) \not= 0 \quad , \quad \forall x \in \overline\Omega.
$$
Then (\ref{eq11}) is sectionally regular. The proof is given in [Theorem 3.4, \cite{DJP20}].
\end{theorem}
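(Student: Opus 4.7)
The plan is a straightforward reduction to classical linear ODE theory on the closure of each regular interval. First, I would observe that, by the very definition of a regular interval, $\Omega$ avoids the singular supports of all coefficients $a_i$ and $b_i$. Hence each restriction $a_i|_\Omega, b_i|_\Omega$ is in fact a smooth function (not merely a distribution in $\A(\Omega)$), and the $\C^\infty(\overline\Omega)$-extensions $a_{i\overline\Omega}, b_{i\overline\Omega}$ invoked in Definition \ref{Re} are well-defined.

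Setting $c_i := a_{i\overline\Omega} + b_{i\overline\Omega} \in \C^\infty(\overline\Omega)$, the BVP (\ref{CODE}) rewrites as the classical scalar $n$-th order linear ODE
$$
\sum_{i=0}^n c_i(x)\, \psi_{\overline\Omega}^{(i)}(x) = f|_{\overline\Omega}(x), \qquad \overline{\psi_{\overline\Omega}(x_0)} = \overline{C}.
$$
The hypothesis $c_n(x) \neq 0$ on $\overline\Omega$ allows one to divide by $c_n$ and put the equation in normal form
$$
\psi_{\overline\Omega}^{(n)}(x) = \frac{f|_{\overline\Omega}(x)}{c_n(x)} - \sum_{i=0}^{n-1} \frac{c_i(x)}{c_n(x)}\, \psi_{\overline\Omega}^{(i)}(x),
$$
whose coefficients are all smooth on $\overline\Omega$. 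Rewriting this as a first-order system for the vector $(\psi_{\overline\Omega}, \psi_{\overline\Omega}', \ldots, \psi_{\overline\Omega}^{(n-1)})^T$ and invoking the Picard--Lindel\"of theorem yields local existence and uniqueness of a $\C^\infty$ solution near $x_0$ matching the prescribed initial data $\overline C$.

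The last step is to promote local existence to existence on the entire $\overline\Omega$, and this is the point I expect to require the most care, because $\Omega$ may well be unbounded: since the singular set of (\ref{eq11}) is finite, the typical regular interval has the form $(x_i, x_{i+1})$ with possibly $x_0 = -\infty$ or $x_{m+1} = +\infty$. Here one invokes the standard global existence result for linear ODEs: because the normal-form coefficients are smooth on $\overline\Omega$ and hence bounded on every compact subinterval, the local solution cannot blow up in finite $x$-time and extends uniquely, while remaining $\C^\infty$, to all of $\overline\Omega$. This yields the unique $\psi_{\overline\Omega} \in \C^\infty(\overline\Omega)$ demanded by Definition \ref{Re}, establishing sectional regularity.
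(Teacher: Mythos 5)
Your argument is correct and is exactly the standard reduction one expects here: on a regular interval the restricted coefficients are smooth, the nonvanishing of $c_n=a_{n\overline\Omega}+b_{n\overline\Omega}$ on $\overline\Omega$ permits passage to normal form, and classical existence--uniqueness for linear systems (with the usual no-blow-up argument for linear equations on possibly unbounded intervals) gives the unique $\C^\infty(\overline\Omega)$ solution required by Definition~\ref{Re}. The paper itself does not reproduce a proof but defers to [Theorem 3.4, \cite{DJP20}], and your reasoning is the same route that reference takes, so there is nothing substantive to add.
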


The next Theorems \ref{1io} and \ref{1ii} describe the general structure of the solutions of (\ref{eq11}) and state the conditions for which these solutions are regular or singular distributions. Theorem \ref{1ii} also provides the interface conditions satisfied by the solutions of (\ref{eq11}) at the singular points. These results were obtained in \cite{DJP20} for the case of one singular point (cf. Theorems 3.6, 3.8 and 3.10 in \cite{DJP20}). Here we present a generalization of these results for the case of an arbitrary (finite) number of singular points. 
  
\begin{theorem}\label{1io}
Consider the $nth$-order ODE (\ref{eq11}) with coefficients $a_i,b_i \in \A ,~ ~ i=0,..,n$. Let $I=\{\xi_1< ...<\xi_m\}$ be the singular set of (\ref{eq11}), and assume that the leading coefficients $a_n$ and $b_n$ satisfy the following conditions:

\begin{itemize}

\item[(C1)] \hspace{0.3cm} $a_n(\xi^-) + b_n(\xi^+) \not=0, ~ \forall \xi \in I$

\item[(C2)] \hspace{0.25cm} For every regular open set $\Omega$,    
$
a_{n\overline{\Omega}}(x) + b_{n\overline{\Omega}}(x) \not=0, \, \forall x \in \overline{\Omega}~.
$  
\end{itemize}
Then the general solution of (\ref{eq11}) is of the form ($m=\sharp I$):
\begin{equation} \label{GF}
\psi=\sum_{k=0}^m \chi_{\Omega_k} \psi_k +\Delta
\end{equation}
where $\Omega_k=(\xi_k,\xi_{k+1}), ~k=0,..,m$ ($\xi_0=-\infty$ and $\xi_{m+1}=+\infty$),  $\chi_{\Omega}$ is the characteristic function of $\Omega$, and $\psi_k \in \C^{\infty}(\overline{\Omega_k})$ satisfies:
\begin{equation}\label{R-}
\sum_{i=0}^n(a_{i\overline{\Omega_k}}+b_{i\overline{\Omega_k}}) \psi^{(i)}_k =f \quad \mbox{on} \quad \overline{\Omega_k} ~.
\end{equation}
Moreover, let $M=$ max\,$\{$ord $a_i,~$ord $b_i:~i=0,..,n\}$. Then $\Delta \in \DO'$ satisfies:
\begin{enumerate}
	\item [(i)] if $M \le n$ then $\Delta=0$.
	\item [(ii)] if $M > n$ then supp $\Delta \subseteq I$ and ord $\Delta \le M-n$.
\end{enumerate}

\end{theorem}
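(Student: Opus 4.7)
The plan is to reduce the analysis to (i) solving a classical smooth ODE on each regular open interval $\Omega_k$, where the star-product equation collapses to the ordinary product equation, and (ii) an order count for Dirac masses at each singular point $\xi_j\in I$, using the explicit product formula (\ref{prodf}) together with condition (C1). First I would show $\text{sing\,supp}\,\psi\subseteq I$: outside $I$ all coefficients are smooth on a neighbourhood, so every star-product in (\ref{eq11}) reduces to a dual product and the equation becomes the classical $\sum_i(a_i+b_i)\psi^{(i)}=f$ with non-vanishing leading coefficient by (C2); matching the top-order delta of any hypothetical singular contribution of $\psi$ outside $I$ then produces a non-cancellable $\delta$-term, so no such contribution exists. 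Consequently $\psi|_{\Omega_k}$ is smooth, satisfies the classical ODE on $\Omega_k$, and by Theorem~\ref{SecReg} extends uniquely to some $\psi_k\in\C^\infty(\overline{\Omega_k})$ obeying (\ref{R-}). Setting $\Delta:=\psi-\sum_{k=0}^{m}\chi_{\Omega_k}\psi_k$, we get $\text{supp}\,\Delta\subseteq I$ and hence $\Delta=\sum_{j=1}^{m}\sum_{l=0}^{K_j}c_{jl}\delta^{(l)}_{\xi_j}$ for some $K_j\in\N_0$ and $c_{jl}\in\CO$, with $c_{jK_j}\neq 0$ whenever $\Delta$ has a non-trivial part at $\xi_j$.

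Next I would substitute this decomposition into (\ref{eq11}) and evaluate, via (\ref{prodf}), the singular part of each $a_i*\psi^{(i)}+\psi^{(i)}*b_i$ at $\xi_j$. The asymmetric structure of (\ref{prodf}) shows that the singular coefficients of $\psi^{(n)}$ at $\xi_j$ get multiplied by $a_n(\xi_j^-)+b_n(\xi_j^+)$, so the highest-order delta appearing in $a_n*\psi^{(n)}+\psi^{(n)}*b_n$ at $\xi_j$ is
$$
c_{jK_j}\bigl(a_n(\xi_j^-)+b_n(\xi_j^+)\bigr)\,\delta^{(K_j+n)}_{\xi_j},
$$
whose coefficient is non-zero by (C1) whenever $c_{jK_j}\neq 0$. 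Every other singular contribution at $\xi_j$ is either of strictly lower delta-order -- those coming from the singular parts of $\psi^{(i)}$ with $i<n$ (order $\leq K_j+n-1$), or from jumps of the regular parts $\psi_k,\psi_{k-1}$ that get differentiated into $\delta$'s -- or of order at most $M$ (those coming from the singular parts of the coefficients themselves, multiplied by smooth one-sided values of $\psi^{(i)}$). Since $f$ is smooth, the sum of all singular contributions must vanish, so the top-order delta must be cancelled.

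The order comparison then delivers both claims. If $M\leq n$, then $K_j+n\geq n\geq M$ whenever $\Delta$ has a component at $\xi_j$, so no other singular term can cancel the top $\delta$ and we must have $c_{jK_j}=0$; iterating downward in $K_j$ forces $\Delta=0$, giving~(i). If $M>n$, the same comparison forces $K_j+n\leq M$, i.e.\ $\text{ord}\,\Delta\leq M-n$, giving~(ii). The hardest part will be the careful bookkeeping of all singular contributions at $\xi_j$: because $*$ is non-commutative and (\ref{prodf}) is asymmetric, one must consistently distinguish left-limits $a_i(\xi_j^-),b_i(\xi_j^-)$ from right-limits, and separate the $\delta$'s generated by differentiating jumps in the regular parts $\psi_k$ from those carried by $\Delta$ itself. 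The passage from a single singular point (as treated in \cite{DJP20}) to finitely many is then a clean localization: since $\delta^{(i)}_{\xi_j}*\delta^{(l)}_{\xi_{j'}}=0$ for $j\neq j'$ by (\ref{prods}), the singular contributions at distinct $\xi_j$ decouple and the one-point analysis from \cite{DJP20} applies at each $\xi_j$ separately.
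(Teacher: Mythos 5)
Your proposal is correct and follows essentially the same route as the paper, which simply defers to Theorem 3.6 of \cite{DJP20}: decompose $\psi$ into the piecewise-smooth part $\sum_k\chi_{\Omega_k}\psi_k$ (extended to $\overline{\Omega_k}$ via sectional regularity, itself obtained from (C2) and Theorem~\ref{SecReg}, exactly as the paper notes) plus a purely singular part $\Delta$ supported on $I$, and then kill or bound $\Delta$ by matching the top-order delta coefficient $c_{jK_j}\bigl(a_n(\xi_j^-)+b_n(\xi_j^+)\bigr)$ at each $\xi_j$, the points decoupling because $\delta^{(i)}_{\xi_j}*\delta^{(l)}_{\xi_{j'}}=0$. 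Only watch the order convention in the final count: with $\mathrm{ord}\,\delta^{(l)}=l+1$ the cancellation condition is $K_j+n\le M-1$ rather than $K_j+n\le M$, which is what actually yields $\mathrm{ord}\,\Delta=K_j+1\le M-n$.
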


\begin{proof}
This Theorem is a trivial generalization of  Theorem 3.6 in \cite{DJP20} for the case where $\sharp I >1$, and the proof follows exactly the same steps. 

Here, in addition, we do not assume that (\ref{eq11}) is sectionally regular (as we did in \cite{DJP20}) but this property follows easily from the condition (C2) by taking into account Theorem \ref{SecReg}.
\end{proof}

The next Theorem is also a straightforward generalization of the Theorems 3.8 and 3.10 (3),(6) of \cite{DJP20}, for the case $\sharp I >1$.
 
\begin{theorem}\label{1ii}
Consider the ODE (\ref{eq11}) satisfying the conditions of Theorem \ref{1io}. Let $I=\{\xi_1 <...<\xi_m\}$ be again the singular set of (\ref{eq11}), and let $\xi_0=-\infty$ and $\xi_{m+1}=+\infty$. 

If $M\le n$ then every solution $\psi$ is of the form (\ref{GF}) with $\Delta =0$.
Moreover, $\psi$ satisfies $m$ interface conditions (one at each singular point $\xi_k \in I$) of the form:
\begin{equation*}\label{Feq}
{\bf A}_k \overline{\psi_{k-1}(\xi_k)} = {\bf B}_k \overline{\psi_{k}(\xi_k)} ~,\quad k=1,...,m
\end{equation*}
where $
\overline{\psi_k(\xi)}=(\psi_k(\xi),...,\psi^{(n-1)}_k(\xi))^T$ and ${\bf A}_k,{\bf B}_k$ are $n \times n$ (in general complex valued) matrices.

Finally, for boundary conditions (\ref{IC}) given at  $x_0 \in (\xi_{k} , \xi_{k+1})$ for some $ k=0,..,m$, the solution of the BVP (\ref{eq11},\ref{IC}) exists and is unique if and only if $\det {\bf B}_s \not=0$ for all $s$ such that $k+1 \le s \le m$ (if $k \le m-1$), and $\det {\bf A}_r \not=0$ for all $r$ such that $1 \le r \le k$ (if $k \ge 1$).

\end{theorem}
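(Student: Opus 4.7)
The plan is to reduce the multi-point statement to the single-point analysis of Theorems 3.8 and 3.10 of \cite{DJP20} by exploiting the fact that the singular points in $I$ are isolated. First, I would invoke Theorem \ref{1io}: under hypotheses (C1),(C2), every solution $\psi \in \A$ decomposes as $\psi=\sum_{k=0}^m \chi_{\Omega_k}\psi_k+\Delta$ with each $\psi_k\in \C^{\infty}(\overline{\Omega_k})$ solving the smooth ODE (\ref{R-}). Since $M\le n$, part (i) of Theorem \ref{1io} forces $\Delta=0$, yielding the first claim.

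Next, to derive the interface conditions, substitute $\psi=\sum_{k=0}^m \chi_{\Omega_k}\psi_k$ into the distributional equation (\ref{eq11}) and expand each product $a_i*\psi^{(i)}$ and $\psi^{(i)}*b_i$ using the explicit formula (\ref{prodf}) of Theorem \ref{2.5}. The key observation is that the coefficient of $\delta^{(j)}_{\xi_k}$ in any of these products depends only on the delta-part of $a_i$ (respectively $b_i$) at $\xi_k$ and on the smooth values of $\psi^{(i)}$ on the two adjacent intervals $\Omega_{k-1},\Omega_k$. Thus the singular part of (\ref{eq11}) decouples into $m$ independent systems, one localized at each $\xi_k$, and the vanishing of the system localized at $\xi_k$ is (by the one-point analysis in \cite{DJP20}) equivalent to a linear relation of the form
\begin{equation*}
\mathbf{A}_k\,\overline{\psi_{k-1}(\xi_k)}=\mathbf{B}_k\,\overline{\psi_k(\xi_k)}, \qquad k=1,\dots,m,
\end{equation*}
with the same matrices $\mathbf{A}_k,\mathbf{B}_k$ as in the single-singular-point case, constructed from the Taylor coefficients of $a_{i\overline{\Omega_{k-1}}},b_{i\overline{\Omega_k}}$ and from the delta coefficients of $a_i,b_i$ at $\xi_k$.

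For the existence and uniqueness statement, I would argue by propagation from the regular interval $\Omega_k \ni x_0$. By Theorem \ref{SecReg}, condition (C2) guarantees that the classical Cauchy problem (\ref{CODE}) on $\overline{\Omega_k}$ with data $\overline{\psi_k(x_0)}=\overline{C}$ has a unique smooth solution $\psi_k$. To propagate rightward across $\xi_{k+1}$, one solves the interface condition $\mathbf{A}_{k+1}\overline{\psi_k(\xi_{k+1})}=\mathbf{B}_{k+1}\overline{\psi_{k+1}(\xi_{k+1})}$ for the Cauchy data of $\psi_{k+1}$, which is possible and unique iff $\det\mathbf{B}_{k+1}\neq 0$; Theorem \ref{SecReg} then provides $\psi_{k+1}\in \C^{\infty}(\overline{\Omega_{k+1}})$. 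Iterating across $\xi_{k+2},\dots,\xi_m$ yields the condition $\det\mathbf{B}_s\neq 0$ for $k+1\le s\le m$. Leftward propagation is symmetric and requires $\det\mathbf{A}_r\neq 0$ for $1\le r\le k$. Conversely, if any of these determinants vanishes, either the corresponding interface condition has no solution (non-existence) or its kernel is non-trivial (non-uniqueness).

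The main obstacle is the bookkeeping needed to verify that the interface relations genuinely localize at each $\xi_k$, i.e.\ that $\mathbf{A}_k,\mathbf{B}_k$ depend only on the local data at $\xi_k$ and not on data at the other singular points. This follows cleanly from the product formula (\ref{prodf}), whose singular part at $\xi_k$ involves only $a_{ij}$ with $x_i=\xi_k$ and the smooth functions $f_{i-1},g_i$ restricted to $\Omega_{k-1},\Omega_k$. Once this locality is established, the rest of the argument is a direct iteration of the one-singular-point case of \cite{DJP20}, and no new analytic content beyond what is already proved there is required.
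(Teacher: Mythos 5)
Your proposal is correct and follows essentially the same route as the paper, which simply declares the theorem a ``straightforward generalization'' of Theorems 3.8 and 3.10 of \cite{DJP20} to several singular points; your localization-plus-iteration argument is precisely the content of that generalization. The only minor imprecision is that the coefficient of $\delta^{(j)}_{\xi_k}$ in the products also receives contributions from the delta part of $\psi^{(i)}$ at $\xi_k$ (generated by the jumps of the $\psi_k$) multiplied by the adjacent smooth values of $a_i,b_i$, but since those jumps are themselves local data at $\xi_k$ your locality conclusion stands.
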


\section{Generalized solutions of the Euler--Bernoulli beam equation with singular coefficients}\label{sec:3}

In this section we use the previous formalism to obtain an intrinsic formula--tion of the EBB equation (\ref{op}) for the general case $a,b\in\A$. We  study the properties of the separable solutions, provide the conditions for which they are regular or singular, and prove existence and uniqueness results. 

Following the general approach of section 2, our first step is to re-write the equation (\ref{op}) in the form (\ref{21}), which we reproduce here for future reference:
\begin{equation}\label{rrr}
\left[{a_0(x)* u''(x,t)+ u''(x,t)*a_1(x)}\right]'' + b_0(x)*\ddot{u}(x,t) + \ddot{u}(x,t)*b_1(x)=0,
\end{equation}
where $x,t \in \RE$.
This equation is well-defined for arbitrary $a_i,b_i \in \A$, $i=0,1$. Moreover, if the coefficients $a_i,b_i$ are smooth then (\ref{rrr}) reduces to (\ref{op}) with $a=a_0+a_1$ and $b=b_0+b_1$ (recall that the product $*$ of a smooth function by a distribution $F \in \A$ is commutative and reproduces the dual product).


The separable solutions of (\ref{rrr}) are of the form
\begin{equation*}
u(x,t)=\phi(x)T(t),
\end{equation*}
substituting these in (\ref{rrr}) we get the eigenvalue equations:
\begin{equation} \label{12i}
\ddot{T}(t)+w^2T(t)=0
\end{equation}
\begin{equation}\label{12ii}
\left[a_0(x)* \phi''(x) + \phi''(x) * a_1(x)\right]'' - w^2\left(b_0(x)*\phi(x)+\phi(x)*b_1(x)\right)=0
\end{equation}
where $w$ is the frequency associated with the vibration mode $\phi$.

The general solutions of (\ref{12i}) are of the form:
\begin{equation}
T(t)=P\cos(w\,t)+Q\sin(w\,t)
\end{equation}
where $P$ and $Q$ are integration constants.

On the other hand, the equation (\ref{12ii}) can be written more explicitly as 
\begin{equation} \label{12ik}
\sum_{i=0}^2 {2 \choose i} 
 \left(a^{(i)}_0* \phi^{(4-i)}+\phi^{(4-i)}*a^{(i)}_1 \right)-w^2\left(b_0*\phi+\phi*b_1\right)=0
\end{equation}

The next results concern the properties of the solutions of (\ref{12ik}). We start with a Corollary of Theorem \ref{1io}.

\begin{corollary}\label{1i}
Consider the  ODE (\ref{12ik}) and let $I$ be its singular set. Assu--me that the coefficients $a_0$ and $a_1$ satisfy the conditions:
\begin{itemize}

\item[(C1')] \hspace{0.3cm} $a_0(\xi^-) + a_1(\xi^+) \not=0, \quad \forall \xi \in I$

\item[(C2')] \hspace{0.25cm} For every regular open set $\Omega$,    
$
a_{0\overline{\Omega}}(x) + a_{1\overline{\Omega}}(x) \not=0, \quad \forall x \in \overline{\Omega}
$  
\end{itemize}
Let 
$$
M= {\rm max}\,\{ {\rm ord} \, a''_i,  \, {\rm ord} \, b_i, \, i=0,1 \}.
$$
Then:
\begin{enumerate}
	\item [(i)]If $M \le 4$ then the solutions of (\ref{12ik}) satisfy ord $\phi =0\Rightarrow \phi\in\C_p^{\infty}$. 
	\item [(ii)]If $M > 4$ then  ord $\phi \le M-4 $.
\end{enumerate}

\end{corollary}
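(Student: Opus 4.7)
The plan is to recognize (\ref{12ik}) as a particular instance of the general ODE (\ref{eq11}) with $n=4$ and $f=0$, and then invoke Theorem \ref{1io} directly. Expanding the Leibniz binomial in (\ref{12ik}), the equation takes the form $\sum_{i=0}^{4}\bigl(A_i*\phi^{(i)}+\phi^{(i)}*B_i\bigr)=0$ with leading coefficients $A_4=a_0,\ B_4=a_1$, subleading $A_3=2a_0',\ B_3=2a_1'$, $A_2=a_0'',\ B_2=a_1''$, $A_1=B_1=0$, and zeroth-order terms $A_0=-w^2 b_0,\ B_0=-w^2 b_1$.

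Next I would match the hypotheses. The singular set of (\ref{12ik}) in the sense of Theorem \ref{1io} is $\bigcup_i(\mathrm{sing\,supp}\,A_i\cup\mathrm{sing\,supp}\,B_i)$; since differentiation on $\A$ neither enlarges nor removes singular support, this coincides with $\mathrm{sing\,supp}\,a_0\cup\mathrm{sing\,supp}\,a_1\cup\mathrm{sing\,supp}\,b_0\cup\mathrm{sing\,supp}\,b_1$, i.e.\ the $I$ referred to in the Corollary. Conditions (C1) and (C2) of Theorem \ref{1io} applied to the leading coefficients $A_4=a_0$ and $B_4=a_1$ are literally the stated (C1') and (C2').

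The step that requires some care is identifying $M$. Theorem \ref{1io} uses $M=\max\{\mathrm{ord}\,A_i,\mathrm{ord}\,B_i:i=0,\ldots,4\}$, whereas the Corollary defines $M=\max\{\mathrm{ord}\,a_i'',\mathrm{ord}\,b_i:i=0,1\}$. I would reconcile them using the elementary fact that distributional differentiation is non-decreasing on orders in $\A$: writing $F=\sum_{j,k} c_{jk}\delta_{x_j}^{(k)}+f$ with $f\in\C_p^\infty$, one sees that $F'$ raises each $\delta_{x_j}^{(k)}$ to $\delta_{x_j}^{(k+1)}$ and at worst contributes additional $\delta$ terms from the jumps of $f$, hence $\mathrm{ord}\,F\le\mathrm{ord}\,F'$. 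Applied twice, this yields $\mathrm{ord}\,a_i\le\mathrm{ord}\,a_i'\le\mathrm{ord}\,a_i''$ for $i=0,1$, so the maximum over the $A_i$ collapses to $\max(\mathrm{ord}\,a_0'',\mathrm{ord}\,b_0)$ and similarly for the $B_i$, matching the Corollary's $M$.

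With the hypotheses of Theorem \ref{1io} verified with $n=4$, its conclusion applies: every solution has the form $\phi=\sum_{k=0}^{m}\chi_{\Omega_k}\phi_k+\Delta$ with $\phi_k\in\C^\infty(\overline{\Omega_k})$ and $\mathrm{supp}\,\Delta\subseteq I$. If $M\le 4$ then $\Delta=0$, so $\phi$ is a genuine piecewise smooth function (of order zero), giving (i). If $M>4$ then $\mathrm{ord}\,\Delta\le M-4$; since the regular part has order $0\le M-4$, we conclude $\mathrm{ord}\,\phi\le M-4$, which is (ii). The only genuine subtlety is the order bookkeeping in the third paragraph; once that is settled the Corollary is a direct specialisation of Theorem \ref{1io}.
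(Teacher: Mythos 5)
Your proof is correct and follows the same route as the paper: both recognize (\ref{12ik}) as the case $n=4$ of (\ref{eq11}) with leading coefficients $a_0,a_1$ and apply Theorem \ref{1io} directly. The only difference is that you explicitly justify, via the monotonicity of ord under differentiation in $\A$, why the Corollary's $M$ equals the maximal coefficient order — a point the paper simply asserts — so your write-up is, if anything, slightly more complete.
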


\begin{proof}
The leading coefficients of (\ref{12ik}) are $a_0$ and $a_1$ (which are associated with $\phi^{(4)}$). These coefficients satisfy the conditions (C1') and (C2') which are equivalent to the conditions (C1) and (C2) in Theorem \ref{1io}. Moreover $M$ is the maximal order of the coefficients of (\ref{12ik}). Hence, the statements (i) and (ii) follow directly from Theorem \ref{1io} (and the fact that (\ref{12ik}) is of 4th-order).
\end{proof}

The next Theorem is the main result of this section. It considers the case $M\le 4$ in detail. We assume, to simplify the presentation, that there is only one singular point (the generalization for an arbitrary, but finite, number of singular points is straightforward). 
 Let the singular point be $\xi_0$, and let $M \le 4$.  Under these conditions the most general form of the coefficients $a_i,b_i \in \A$, $i=0,1$ is:
 \begin{eqnarray}\label{jjjk}
a_i & = & a_{i-}H(\xi_0-x)+ a_{i+}H(x-\xi_0)+\sum_{j=0}^1 A_{ij}\delta_{\xi_0}^{(j)} \\
\label{jjjkkk} b_i& = & b_{i-}H(\xi_0-x)+ b_{i+}H(x-\xi_0)+ \sum_{j=0}^3 B_{ij}\delta_{\xi_0}^{(j)}
\end{eqnarray}
where $a_{i\pm},b_{i\pm} \in \C^{\infty}$, $A_{ij},B_{ij} \in \CO$, $i=0,1$. The coefficients $a_{i-},b_{i-}$ and $a_{i+},b_{i+}$ could have been defined on $(-\infty,\xi_0)$ and $(\xi_0,+\infty)$, respectively. However, since they admit a smooth extension to $x=\xi_0$, and hence to $\RE$ (cf. Whitney's extension theorem, \cite{Fle77,Whi34}), we assume from the beginning that their domain is $\RE$.  
 	
 	We introduce for future reference the following notation ($i=0,1$):
\begin{equation}\label{notationa}
a=a_{0-}+a_{1+} \quad , \quad a_{\pm}=a_{0\pm}+a_{1\pm}
\end{equation}
$$
v_i=a_{i+}-a_{i-} \quad , \quad 
b_{\pm}=b_{0\pm}+b_{1\pm} \, .
$$
We then have:
\begin{theorem} \label{jjj}
Consider the  ODE (\ref{12ik}) with coefficients (\ref{jjjk},\ref{jjjkkk}) satisfying the conditions:
\begin{itemize}

\item[(C1'')] \hspace{0.3cm} $a(\xi_0) \not=0$

\item[(C2'')] \hspace{0.3cm}   
$a_{\pm}(x) \not=0, \quad \forall x \in \RE$. 
\end{itemize}
Then every solution of (\ref{12ik}) is of the form
\begin{equation}\label{Solution1}
\phi (x)=H (\xi_0-x) \phi_1 (x) + H (x-\xi_0)\phi_2 (x) 
\end{equation}
where $\phi_1 , \phi_2 \in \C^\infty$ satisfy the ODEs:
\begin{equation}\label{ODESReg}
\sum_{i=0}^{2} {2 \choose i}  a_-^{(i)}\phi^{(4-i)}_1-w^2b_-\phi_1=0 \text{ and }  \sum_{i=0}^{2} {2 \choose i}  a_+^{(i)}\phi^{(4-i)}_2-w^2b_+\phi_2=0 
\end{equation} 
on $(-\infty,\xi_0]$ and $[\xi_0,+\infty)$, respectively.

Moreover, at the non-regular point $\xi_0$, the solutions (\ref{Solution1}) satisfy the interface condition:
\begin{equation}\label{ICond}
{\bf A}(\xi_0) \overline{\phi_1(\xi_0)} = {\bf B}(\xi_0) \overline{\phi_2(\xi_0)}
\end{equation}
where ${\bf A}$ and ${\bf B}$ are the matrices (cf.(\ref{jjjk},~\ref{jjjkkk}, \ref{notationa}))
\begin{equation} \label{ee}
{\bf A}=\left( {\begin{array}{*{20}{c}}
  B_{10}w^2&-B_{11}w^2&a_-'+B_{12}w^2& a_--B_{13}w^2\\ 
 B_{11}w^2&-2B_{12}w^2&a_-+3B_{13}w^2&0\\ 
 -a'+B_{12}w^2&a-3B_{13}w^2&-A_{10}& A_{11} \\ 
a+B_{13}w^2&0&-A_{11}&0
\end{array}} \right) 
\end{equation}
\begin{equation} \label{eee}
{\bf B}=\left( {\begin{array}{*{20}{c}}
  -B_{00}w^2&B_{01}w^2&a_+'-B_{02}w^2& a_++B_{03}w^2\\ 
 -B_{01}w^2&2B_{02}w^2&a_+-3B_{03}w^2&0\\ 
 -a'-B_{02}w^2&a+B_{03}w^2&A_{00}&-A_{01}  \\ 
a-B_{03}w^2&0&A_{01}&0
\end{array}} \right) 
\end{equation}
and 
$$
\overline{\phi_i}= \left( 
  {{\phi_i}},\,{{\phi'_i}},\,{{\phi''_i}},\,{{\phi'''_i}} \right)^T, \, i=1,2
$$
\end{theorem}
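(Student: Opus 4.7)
The plan is to combine the structural result of Corollary \ref{1i}(i) with an explicit coefficient-matching computation at $\xi_0$. For coefficients of the form (\ref{jjjk},\ref{jjjkkk}) one has $M=4$: indeed $a_i$ contains Dirac derivatives up to $\delta'_{\xi_0}$ (order $2$), so $a_i''$ has order $4$, while $b_i$ already has order $4$ through its $\delta^{(3)}_{\xi_0}$-term. The hypotheses (C1''), (C2'') are precisely (C1'), (C2') at the unique singular point $\xi_0$, so Corollary \ref{1i}(i) applies and forces $\phi\in\C_p^{\infty}$ with singular support contained in $\{\xi_0\}$; hence $\phi$ takes the form (\ref{Solution1}) for some smooth $\phi_1,\phi_2$. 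On each regular interval $(-\infty,\xi_0)$ and $(\xi_0,+\infty)$ the coefficients are smooth, every $*$-product in (\ref{12ik}) collapses to the ordinary pointwise product, and regrouping the $a_0,a_1$ and $b_0,b_1$ contributions via the notation (\ref{notationa}) produces exactly the classical ODEs (\ref{ODESReg}).

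Next I substitute the ansatz (\ref{Solution1}) into (\ref{12ik}) and extract the component supported at $\xi_0$. For this I first write each distributional derivative $\phi^{(k)}$, $k=1,2,3,4$, in the canonical form (\ref{1}): successive differentiation of $H(\xi_0-x)\phi_1+H(x-\xi_0)\phi_2$ introduces Dirac terms up to $\delta^{(k-1)}_{\xi_0}$ whose coefficients are the jumps $\phi_2^{(j)}(\xi_0)-\phi_1^{(j)}(\xi_0)$ for $j=0,\dots,k-1$. I then evaluate each product $a_i^{(i)}*\phi^{(4-i)}$, $\phi^{(4-i)}*a_i^{(i)}$, $b_i*\phi$ and $\phi*b_i$ via the explicit formula (\ref{prodf}), simplifying the resulting dual products with the identity $g\,\delta^{(j)}_{\xi_0}=\sum_{k=0}^{j}{j \choose k}(-1)^k g^{(k)}(\xi_0)\delta^{(j-k)}_{\xi_0}$. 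The non-commutativity of $*$, encoded in the rules (\ref{prods}), is decisive here: any left factor $a_0,b_0$ contributes its left limit $a_{0-},b_{0-}$ multiplying the Dirac-derivative terms of the right factor, while a right factor $a_1,b_1$ contributes its right limit $a_{1+},b_{1+}$. This is precisely the origin of $a_-,b_-$ in ${\bf A}$ and $a_+,b_+$ in ${\bf B}$.

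Requiring the coefficient of $\delta^{(j)}_{\xi_0}$ in (\ref{12ik}) to vanish for $j=0,1,2,3$ yields four scalar identities between $\overline{\phi_1(\xi_0)}$ and $\overline{\phi_2(\xi_0)}$. The $j=3$ equation involves only the top-order products $a_0*\phi^{(4)}$, $\phi^{(4)}*a_1$ and the $B_{i3}\delta^{(3)}_{\xi_0}$-parts of $b_i$, giving the last row of (\ref{ee},\ref{eee}); the $j=2$ equation additionally sees $a_\pm'$ and the $B_{i2}$; the $j=1$ equation further introduces the $A_{i1}$ coefficients of the $\delta'_{\xi_0}$-part of $a_i$; and the $j=0$ equation finally picks up the $A_{i0}$ and the remaining $B_{ij}$. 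Assembling these four equations in matrix form yields exactly the interface condition (\ref{ICond}) with ${\bf A},{\bf B}$ as displayed in (\ref{ee},\ref{eee}).

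The main obstacle will be the careful bookkeeping of these distributional products: one has to keep straight which smooth limit ($-$ or $+$) multiplies each jump of $\phi^{(k)}$, to expand the Leibniz terms arising from the two differentiations applied to the $a_i$ in (\ref{12ik}), and to handle the dual product reductions above without losing any sign. Once this accounting is carried out correctly, the argument is a direct specialization of the one-singular-point analysis underlying Theorems 3.8 and 3.10 of \cite{DJP20}, enriched by the new parameters $A_{ij}$ and $B_{ij}$ which account for the Dirac-derivative entries appearing in (\ref{ee},\ref{eee}).
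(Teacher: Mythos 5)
Your proposal follows essentially the same route as the paper's proof: the structural part (the form (\ref{Solution1}) together with the one-sided ODEs (\ref{ODESReg})) is obtained from the general results of section 3 (the paper invokes Theorems \ref{1io} and \ref{1ii} directly rather than Corollary \ref{1i}, which amounts to the same thing here), and the interface condition is derived exactly as you describe, by substituting the ansatz into (\ref{12ik}), expanding the distributional derivatives of $H(\xi_0-x)\phi_1+H(x-\xi_0)\phi_2$, evaluating the $*$-products via (\ref{prodf}) and the identity for $f^{(s)}\delta^{(n)}_{\xi_0}$, and equating to zero the coefficients of $\delta^{(j)}_{\xi_0}$, $j=0,1,2,3$. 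One small misattribution worth correcting: the non-commutativity rule you state produces $a=a_{0-}+a_{1+}$ multiplying the \emph{jumps} of $\phi^{(k)}$; the one-sided sums $a_{\pm}$ appearing in ${\bf A}$ and ${\bf B}$ only emerge after these terms are combined with the $(a_{i+}-a_{i-})\delta^{(i-1)}_{\xi_0}$ contributions coming from differentiating the Heaviside parts of the $a_i$, and $b_{\pm}$ do not in fact occur in the matrices at all (only the $B_{ij}$ do) --- but this does not affect the validity of the computation you outline.
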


\begin{proof}
The ODE (\ref{12ik}), with coefficients (\ref{jjjk},\,\ref{jjjkkk}) of the form (C1'') and (C2''), satisfies the prerequisites of Theorem \ref{1io} with $I=\{\xi_0\}$. Since, in this case, $\Omega_0=(-\infty, \xi_0)$ and 
$\Omega_1=(\xi_0,+\infty)$, we have:
$$
\chi_{\Omega_0}=H(\xi_0-x) \quad \mbox{and} \quad 
\chi_{\Omega_1}=H(x-\xi_0).
$$
Moreover $M\le 4$, and so it follows from Theorem \ref{1ii} that $\phi$ is of the form (\ref{Solution1}), and also (cf. Theorem \ref{1io}, eq.(\ref{R-})) that $\phi_1$ and $\phi_2$ satisfy (\ref{ODESReg}) on $\overline{\Omega_0}$ and $\overline{\Omega_1}$, respectively. 

In order to determine the explicit form of the interface condition let us substitute the coefficients (\ref{jjjk},\,\ref{jjjkkk}) into 
(\ref{12ik}); we get:
\begin{eqnarray}\label{88}
& &\sum_{i=0}^{2} {2 \choose i} \left[(a_{0-}^{(i)}\chi_{\Omega_0}+a_{0+}^{(i)}\chi_{\Omega_1} )*
 \phi^{(4-i)}+\phi^{(4-i)}*(a_{1-}^{(i)}\chi_{\Omega_0}+a_{1+}^{(i)}\chi_{\Omega_1})\right]\\
\nonumber&& -w^2 \left[(b_{0-}\chi_{\Omega_0}+b_{0+}\chi_{\Omega_1} )*\phi+\phi*(b_{1-}\chi_{\Omega_0}+b_{1+}\chi_{\Omega_1}\right] + B(x)=0
\end{eqnarray}
where $ B(x)$ includes all the terms displaying a Dirac delta or one of its derivatives, and is given by:
\begin{eqnarray}\label{B}
 && B(x)=\sum\limits_{i=0}^2 \sum\limits_{j=0}^1 {2 \choose i} \left[A_{0j} \delta_{\xi_0} ^{(j+i)}*\phi^{(4-i)}+\phi^{(4-i)}*A_{1j} \delta_{\xi_0} ^{(j+i)}\right]\\
&+& \sum\limits_{i = 1}^2 {2 \choose i} \left[(a_{0+}-a_{0-})\delta_{\xi_0} ^{(i-1)} *\phi^{(4-i)}+\phi^{(4-i)}*(a_{1+}-a_{1-})\delta_{\xi_0} ^{(i-1)}\right]\nonumber \\
&+& 2(a'_{0+}-a'_{0-})\delta_{\xi_0}*\phi''+ 2\phi''*(a'_{1+}-a'_{1-})\delta_{\xi_0}\nonumber\\
&-&w^2 \sum\limits_{j=0}^3\left[B_{0j}\delta_{\xi_0}^{(j)}*\phi+\phi* B_{1j}\delta_{\xi_0}^{(j)}\right]~. \nonumber
\end{eqnarray}
For an arbitrary $\phi \in \A$, we have supp $B(x) \subseteq \left\{\xi_0\right\}$. 
Substituting the solution (\ref{Solution1}) into (\ref{88},\,\ref{B}), and taking into account the relation ($i \ge 1$)
\begin{equation}
\phi^{(i)}=H(\xi_0-x)\phi_1^{(i)} +H(x-\xi_0) \phi_2^{(i)} +  \sum_{j=1}^i {i\choose j} \left(\phi_2- \phi_1 \right)^{{(i-j)}}\delta^{(j-1)}_{\xi_0} 
\end{equation}
and the equations (\ref{ODESReg}), we get (cf.(\ref{notationa})):
\begin{eqnarray} \label{Eq3.17}
&&\sum\limits_{i=0}^2 \sum\limits_{j=1}^{4-i}{2 \choose i}{4-i\choose j} a^{(i)} ( \phi_2-\phi_1)^{(4-i-j)}\delta^{(j-1)}_{\xi_0}\\
&+&\sum\limits_{m=0}^1\sum\limits_{i=0}^2 \sum\limits_{j=0}^{1}{2 \choose i} A_{mj}\phi_{2-m}^{(4-i)}\delta^{(j+i)}_{\xi_0}+\sum\limits_{m=0}^1 \sum\limits_{i=1}^{2}{2 \choose i} v_m\phi_{2-m}^{(4-i)}\delta^{(i-1)}_{\xi_0}\nonumber\\
&+&2\sum\limits_{m=0}^1 v_m'\phi''_{2-m}\delta_{\xi_0}
-w^2\sum\limits_{m=0}^1 \sum\limits_{j=0}^{3}B_{mj}\phi_{_{2-m}}\delta^{(j)}_{\xi_0}=0 ~. \nonumber
\end{eqnarray}
In this calculation we used (\ref{prods}), particularly the identity $\delta^{(i)}_{x_0} *\delta^{(j)}_{x_1}=0$, valid for all $x_0,x_1 \in \RE,~i,j\in \N_0$. 

To proceed we use the relation (cf.[page 38, \cite{Kan98}]): 
$$
f^{(s)}(x)\delta_{\xi_0}^{(n)}=(-1)^n\sum^n_{k=0} (-1)^k{n \choose k} f^{(s+n-k)} (\xi_0)\delta_{\xi_0}^{(k)}, \quad f \in \C^{s+n}, \quad s,n \in \N_0
$$
and rewrite (\ref{Eq3.17}) as a condition for the values of $\phi_1$ and $\phi_2$ (and their derivatives) at $x=\xi_0$:
$$
\begin{array}{lll}\label{yt}
\sum\limits_{i=0}^2 \sum\limits_{j=1}^{4-i}\sum\limits_{k=0}^{j-1}{2 \choose i}{4-i\choose j} {j-1\choose k} (-1)^{j+k-1}\left[\left(a^{(i)} ( \phi_2-\phi_1)^{(4-i-j)}\right)^{(j-1-k)}\right]_{x=\xi_0}\delta^{(k)}_{\xi_0} && \\
\nonumber +\sum\limits_{m=0}^1\sum\limits_{i=0}^2 \sum\limits_{j=0}^{1} \sum\limits_{k=0}^{j+i}{2 \choose i}{j+i \choose k} (-1)^{j+i+k}A_{mj}\phi_{2-m}^{(4+j-k)}(\xi_0)\,\,\delta^{(k)}_{\xi_0} && \\
\nonumber +\sum\limits_{m=0}^1 \sum\limits_{i=1}^{2}\sum\limits_{k=0}^{i-1}{2 \choose i} {i-1 \choose k} (-1)^{i-1+k} \left[\left(v_m\phi_{2-m}^{(4-i)}\right)^{(i-1-k)}\right]_{x=\xi_0}\delta^{(k)}_{\xi_0} &&\\
\nonumber +2\sum\limits_{m=0}^1 \left[v_m'\phi''_{2-m}\right]_{x=\xi_0}\delta_{\xi_0} &&\\
-w^2 \sum\limits_{m=0}^1\sum\limits_{j=0}^{3}  \sum\limits_{k=0}^{j} {j \choose k}  (-1)^{j+k} B_{mj}\phi_{_{2-m}}^{(j-k)}(\xi_0)\,\,\delta^{(k)}_{\xi_0}=0 ~. &&
\end{array}
$$
Finally, using the Leibniz rule to expand the derivatives in the first and third lines,
$$
\begin{array}{lll}
\left(a^{(i)} (\phi_2-\phi_1)^{(4-i-j)}\right)^{(j-1-k)}=\sum\limits_{l=0}^{j-1-k} {j-1-k \choose l} a^{(i+j-1-k-l)} ( \phi_2-\phi_1)^{(4-i-j+l)}&&\\
\left(v_m\phi_{2-m}^{(4-i)}\right)^{(i-1-k)}=\sum\limits_{l=0}^{i-1-k} {i-1-k \choose l} v_m^{(i-1-k-l)} \phi_{2-m}^{(4-i+l)} &&
\end{array}
$$
we get, after a long but straightforward calculation:
$$
\begin{array}{lll}
\left[(a_++B_{03}w^2)\phi'''_2-(a_--B_{13}w^2)\phi'''_1+ (a'_+-B_{02}w^2)\phi''_2-\ (a'_-+B_{12}w^2)\phi''_1 \right. &&\\
\left.\hspace{0.4cm}+B_{01}w^2\phi'_2-B_{00}w^2\phi_2+B_{11}w^2\phi'_1-B_{10}w^2\phi_1\right]_{x=\xi_0}\delta_{\xi_0} &&\vspace{.2cm}\\
+\left[(a_+-3B_{03}w^2)\phi''_2-(a_-+3B_{13}w^2)\phi''_1+2B_{02}w^2\phi'_2-B_{01}w^2\phi_2 \right.&&\\
\left.\hspace{0.4cm}+2B_{12}w^2\phi'_1-B_{11}w^2\phi_1 \right]_{x=\xi_0}\delta'_{\xi_0}&&\vspace{.2cm}\\
+\left[a(\phi'_2-\phi'_1)-a'(\phi_2-\phi_1)-A_{01}\phi'''_2-A_{11}\phi'''_1+A_{00}\phi''_2+A_{10}\phi''_1\right.&&\\
\left.\hspace{0.4cm}+3B_{03}w^2\phi'_2+3B_{13}w^2\phi'_1-B_{02}w^2\phi_2-B_{12}w^2\phi_1\right]_{x=\xi_0}\delta''_{\xi_0}&&\vspace{.2cm}\\
+\left[a(\phi_2-\phi_1)+A_{01}\phi''_2- B_{03}w^2\phi_2+A_{11}\phi''_1-B_{13}w^2\phi_1\right]_{x=\xi_0}\delta'''_{\xi_0}=0 \,.
\end{array}
$$
Since $\delta_{\xi_0}$ and its derivatives are linearly independent, this equation is equivalent to the system:
\begin{equation}\label{eeii}
\left\{\begin{array}{ll}
\left[B_{10}w^2\phi_1-B_{11}w^2\phi'_1+\ (a'_-+B_{12}w^2)\phi''_1+(a_--B_{13}w^2)\phi'''_1\right]_{x=\xi_0} \vspace{0.1cm}\\ 
=\left[-B_{00}w^2\phi_2+B_{01}w^2\phi'_2+ (a'_+-B_{02}w^2)\phi''_2+(a_++B_{03}w^2)\phi'''_2\right]_{x=\xi_0}\\
\\
\left[B_{11}w^2\phi_1-2B_{12}w^2\phi'_1+(a_-+3B_{13}w^2)\phi''_1\right]_{x=\xi_0}\vspace{0.1cm}\\
=\left[-B_{01}w^2\phi_2+2B_{02}w^2\phi'_2+(a_+-3B_{03}w^2)\phi''_2\right]_{x=\xi_0} \\
\\
\left[(-a'+B_{12}w^2)\phi_1+(a-3B_{13}w^2)\phi'_1-A_{10}\phi''_1+A_{11}\phi'''_1\right]_{x=\xi_0}\vspace{0.1cm}\\
=\left[(-a'-B_{02}w^2)\phi_2+(a+3B_{03}w^2)\phi'_2+A_{00}\phi''_2-A_{01}\phi'''_2\right]_{x=\xi_0}\\
\\
\left[(a+B_{13}w^2)\phi_1-A_{11}\phi''_1\right]_{x=\xi_0}=\left[(a- B_{03}w^2)\phi_2+A_{01}\phi''_2\right]_{x=\xi_0}
\end{array}\right. 
\end{equation}
which can be rewritten in the form ${\bf A}(\xi_0) \overline{\phi_1(\xi_0)} = {\bf B}(\xi_0) \overline{\phi_2(\xi_0)}$
where ${\bf A}$ and ${\bf B}$ are the matrices (\ref{ee},\,\ref{eee}) and 
$\overline{\phi_i}$ is the column vector $\left( 
  {{\phi_i}},\,{{\phi'_i}},\,{{\phi''_i}},\,{{\phi'''_i}} \right)$, $i=1,2$.
\end{proof}

\begin{corollary}\label{gi-c}
Assuming the conditions of Theorem \ref{jjj}, we have:
\begin{enumerate}
  \item [(i)] If ord $a_i\leq 1$ and ord $b_i \le 3,\,i=0,1$
 then (\ref{12ik},\ref{jjjk},\ref{jjjkkk})  has continuous solutions. 
  \item [(ii)]
If ord $a_i=0$ and ord $b_i \le 2,\,i=0,1$ then (\ref{12ik},\ref{jjjk},\ref{jjjkkk}) has continuously differentiable solutions.
\end{enumerate}
\end{corollary}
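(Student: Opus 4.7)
The proof should be a direct computation by plugging the order hypotheses into the interface matrices $\mathbf A,\mathbf B$ of Theorem~\ref{jjj} and reading off the first two jump conditions at $\xi_0$. By Theorem~\ref{jjj} every solution has the form
$$
\phi(x)=H(\xi_0-x)\,\phi_1(x)+H(x-\xi_0)\,\phi_2(x),
$$
with $\phi_1,\phi_2\in\C^\infty$. So continuity of $\phi$ at $\xi_0$ is equivalent to $\phi_1(\xi_0)=\phi_2(\xi_0)$, and $\C^1$-regularity is equivalent to that together with $\phi_1'(\xi_0)=\phi_2'(\xi_0)$. The task is to extract these two scalar equalities from the matrix identity $\mathbf A(\xi_0)\overline{\phi_1(\xi_0)}=\mathbf B(\xi_0)\overline{\phi_2(\xi_0)}$.

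First I would translate the order hypotheses into vanishing statements for the coefficients $A_{ij},B_{ij}$ in (\ref{jjjk},\ref{jjjkkk}). Since $\operatorname{ord}\delta_{\xi_0}^{(j)}=j+1$, and the piecewise smooth part of $a_i$ (resp.\ $b_i$) is of order $0$, we have $\operatorname{ord}a_i\le 1\iff A_{i1}=0$, $\operatorname{ord}a_i=0\iff A_{i0}=A_{i1}=0$, $\operatorname{ord}b_i\le 3\iff B_{i3}=0$, and $\operatorname{ord}b_i\le 2\iff B_{i2}=B_{i3}=0$, for $i=0,1$.

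For (i), substitute $A_{i1}=B_{i3}=0$ into rows $4$ of $\mathbf A$ and $\mathbf B$ in (\ref{ee},\ref{eee}). Both rows collapse to $(a,0,0,0)$, so the fourth component of the interface condition becomes
$$
a(\xi_0)\,\phi_1(\xi_0)=a(\xi_0)\,\phi_2(\xi_0).
$$
Since $a(\xi_0)\neq 0$ by hypothesis (C1''), this yields $\phi_1(\xi_0)=\phi_2(\xi_0)$, and continuity follows. For (ii), under the additional assumption $A_{i0}=B_{i2}=0$, rows $3$ of $\mathbf A$ and $\mathbf B$ simplify to $(-a',a,0,0)$. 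The third component of the interface condition then reads
$$
-a'(\xi_0)\,\phi_1(\xi_0)+a(\xi_0)\,\phi_1'(\xi_0)=-a'(\xi_0)\,\phi_2(\xi_0)+a(\xi_0)\,\phi_2'(\xi_0).
$$
Combined with $\phi_1(\xi_0)=\phi_2(\xi_0)$ from part (i) and $a(\xi_0)\neq 0$, this gives $\phi_1'(\xi_0)=\phi_2'(\xi_0)$, hence $\phi\in \C^1$ near $\xi_0$.

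The only potential subtlety is bookkeeping: one must verify that the $w^2$-coefficients $B_{i0},B_{i1},B_{i2}$ in rows $3$ and $4$ of $\mathbf A,\mathbf B$ are indeed multiplied only by entries that get killed by the hypotheses---which is exactly what the structure of (\ref{ee},\ref{eee}) displays, since the nonzero entries in rows $3$--$4$ that involve $B_{ij}$ all carry $j\in\{2,3\}$, and those involving $A_{ij}$ all carry $j\in\{0,1\}$. No further work is needed; the corollary is essentially a reading of the matrix.
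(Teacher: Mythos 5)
Your proposal is correct and follows essentially the same route as the paper: translate the order hypotheses into the vanishing of the relevant $A_{ij},B_{ij}$, then read off the fourth (and, for part (ii), also the third) row of the interface condition ${\bf A}(\xi_0)\overline{\phi_1(\xi_0)}={\bf B}(\xi_0)\overline{\phi_2(\xi_0)}$ and divide by $a(\xi_0)\neq 0$ using (C1''). The paper phrases this via the equivalent system (\ref{eeii}) rather than the matrix rows, and additionally remarks that weaker sufficient conditions exist, but the argument is the same.
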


\begin{proof}

(i) It follows from the fourth equation in the system (\ref{eeii}) that if $A_{i1}=0$ (i.e. ord $a_i\leq1$), $i=0,1$; and $ B_{i3}=0$ (i.e. ord $b_i\leq3$), $i=0,1$ then 
\begin{equation}\label{gi}
\phi_2 (\xi_0)=\phi_1 (\xi_0) ~,
\end{equation}
since $a(\xi_0)\not= 0$ (because (\ref{12ik}) satisfies (C1'')). Notice that there are other conditions for which the solutions of (\ref{12ik}) are continuous (e.g. $A_{11}=A_{01}=0$ and $B_{13}=-B_{03}\not=-a(\xi_0)/w^2 $). We only stated the simplest case.

 

(ii) From the third and fourth equations in (\ref{eeii}) we easily conclude that if $A_{ij}=0$ and $B_{i3}=B_{i2}=0,\,i,j=0,1$, (i.e. ord $a_i=0$ and ord $b_i\leq 2,\,i=0,1$)  then 
\begin{equation}\label{gii}
\phi_2 (\xi_0)=\phi_1 (\xi_0) \text{ and } \phi'_2 (\xi_0)=\phi'_1 (\xi_0)~.
\end{equation}
It then follows from (\ref{Solution1}) that the solutions of (\ref{12ik}) are continuously differen--tiable on $\RE$. Once again, there are other, more involved, conditions for which this property also holds. 
\end{proof}


From Theorem \ref{jjj} we also find that:

\begin{corollary} \label {hh}
Consider the ODE (\ref{12ik}), satisfying the conditions of Theorem \ref{jjj}, with arbitrary boundary conditions $\overline{\phi (x_0)}= \overline{C}$ at $x_0 < \xi_0$ (respectively $x_0 > \xi_0$), where $\overline{C} \in \CO^4$.
The solution of this BVP exists and is unique  if and only if $\det {\bf B}(\xi_0) \not=0$ (respectively $\det {\bf A}(\xi_0) \not=0$), where ${\bf A}$ and ${\bf B}$ are the matrices given by (\ref{ee},\,\ref {eee}) and $\overline{\phi}=\left({\phi},\,{{\phi'}},\,{{\phi''}},\,{{\phi'''}} \right)^T $.
\end{corollary}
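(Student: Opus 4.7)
The plan is to recognize Corollary \ref{hh} as the specialization of the general existence--uniqueness statement in Theorem \ref{1ii} to the present setting with a single singular point ($m=1$, $n=4$). Indeed, Theorem \ref{jjj} has already verified that, under (C1'') and (C2''), the ODE (\ref{12ik}) fits the framework of Theorems \ref{1io} and \ref{1ii}, with the interface matrices ${\bf A}, {\bf B}$ at $\xi_0$ explicitly given by (\ref{ee},\,\ref{eee}). Reading off the conclusion of Theorem \ref{1ii} with $m=1$ immediately yields the claim: for $x_0 < \xi_0$ one has $k=0\le m-1$, forcing the single determinantal condition $\det {\bf B}(\xi_0)\neq 0$; for $x_0>\xi_0$ one has $k=1=m$, forcing $\det {\bf A}(\xi_0)\neq 0$.

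Rather than merely invoking Theorem \ref{1ii}, I would reproduce the short argument directly in terms of $\phi_1$ and $\phi_2$. Consider the case $x_0 < \xi_0$ (the other is symmetric). By Theorem \ref{jjj}, any candidate solution has the form $\phi = H(\xi_0-x)\phi_1 + H(x-\xi_0)\phi_2$, and the boundary data translate to $\overline{\phi_1(x_0)}=\overline{C}$ since $\phi$ coincides with $\phi_1$ on a neighbourhood of the regular point $x_0$. Condition (C2'') guarantees $a_-(x)\neq 0$ on $\RE$, so Theorem \ref{SecReg} applied to the first ODE in (\ref{ODESReg}) produces a unique smooth $\phi_1$ on $(-\infty,\xi_0]$, fixing $\overline{\phi_1(\xi_0)}$. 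The interface condition (\ref{ICond}) then reads
$$
{\bf B}(\xi_0)\,\overline{\phi_2(\xi_0)} = {\bf A}(\xi_0)\,\overline{\phi_1(\xi_0)},
$$
a linear system in $\overline{\phi_2(\xi_0)}\in\CO^4$ with prescribed right-hand side. If $\det {\bf B}(\xi_0)\neq 0$, the system has a unique solution, and (C2'') for $a_+$ combined with Theorem \ref{SecReg} delivers a unique smooth $\phi_2$ on $[\xi_0,+\infty)$; reassembling via (\ref{Solution1}) produces the unique solution of the BVP.

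Conversely, if $\det {\bf B}(\xi_0)=0$, then $\ker {\bf B}(\xi_0)\neq \{0\}$. For the fixed vector ${\bf A}(\xi_0)\overline{\phi_1(\xi_0)}$, either it fails to lie in the range of ${\bf B}(\xi_0)$ (no admissible $\overline{\phi_2(\xi_0)}$, hence no solution), or it does (infinitely many admissible initial vectors and, by sectional regularity applied to each, infinitely many solutions $\phi_2$). In either sub-case the BVP cannot have a unique solution, for any choice of $\overline{C}$. The case $x_0 > \xi_0$ is entirely analogous with the roles of $\phi_1,\phi_2$ and of ${\bf A},{\bf B}$ interchanged.

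There is no real obstacle: the corollary is essentially a bookkeeping consequence of Theorems \ref{jjj}, \ref{1ii} and \ref{SecReg}. The only mild care is in the converse direction, where one must note that even if the inhomogeneous linear system happens to be consistent for a particular $\overline{C}$, the presence of a nontrivial kernel of ${\bf B}(\xi_0)$ (or ${\bf A}(\xi_0)$) already destroys uniqueness, so the ``if and only if'' holds uniformly in the boundary data.
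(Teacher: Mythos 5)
Your proposal is correct and follows essentially the same route as the paper: the paper's proof simply notes that (\ref{12ik}) under the hypotheses of Theorem \ref{jjj} satisfies the prerequisites of Theorem \ref{1ii} with $I=\{\xi_0\}$ and $M\le 4$, and reads off the determinant conditions exactly as in your first paragraph. Your additional unpacking of the argument in terms of $\phi_1$, $\phi_2$, the linear system ${\bf B}(\xi_0)\overline{\phi_2(\xi_0)}={\bf A}(\xi_0)\overline{\phi_1(\xi_0)}$ and sectional regularity is a correct (and more self-contained) elaboration of what Theorem \ref{1ii} encapsulates, but it is not a genuinely different approach.
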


\begin{proof}
It follows from Theorem \ref{jjj} that the solutions of the ODE (\ref{12ik}) satisfy the interface conditions  (\ref{ICond}) with ${\bf A}$ and ${\bf B}$ given by (\ref{ee},\,\ref {eee}).

Moreover, in this case, (\ref{12ik}) also satisfies the pre-requisites of Theorem \ref{1ii} with 
$I=\{\xi_0\}$ and $M\le 4$. It then follows from this Theorem that the solution of the BVP (\ref{12ik},\,\ref{IC}) for $x_0 < \xi_0$  
exists and is unique if and only if $\det {\bf B}(\xi_0) \not=0$. An equivalent result holds for the case $x_0 > \xi_0$, concluding the proof. 
\end{proof}

Some specific cases for which the solution of the BVP (\ref{12ik},\,\ref{IC}) exists and is unique are the following:

\begin{corollary}
Consider the  ODE (\ref{12ik}) with coefficients (\ref{jjjk},\,\ref{jjjkkk}) satisfying the conditions of Theorem \ref{jjj}. 
For $i=0$ (or $i=1$) assume that the coefficients $a_i, b_i$ satisfy one of the following conditions:
\begin{itemize}
\item [(a)] ord $a_i=0$ and ord $b_i \le 3$,
 \item [(b)] ord $a_i\le 1$ and ord $b_i\leq 2$
\item [(c)] ord $a_i \le 2$ and ord $b_i \le 1$
\end{itemize}
Then the solution of (\ref{12ik}) exists and is unique for arbitrary boundary condi--tions
$
\overline{\phi(x_0)} =\overline{C}
$ 
given at $x_0 < \xi_0$ (respectively, $x_0 > \xi_0$) .



\end{corollary}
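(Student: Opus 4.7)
The plan is to reduce each of the cases (a), (b), (c) to the invertibility criterion of Corollary~\ref{hh}: for a BVP given at $x_0 < \xi_0$ it suffices to show $\det {\bf B}(\xi_0)\ne 0$ when the hypotheses are placed on $a_0, b_0$; for a BVP at $x_0 > \xi_0$ it suffices to show $\det {\bf A}(\xi_0)\ne 0$ when they are placed on $a_1, b_1$.

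Since $\delta_{\xi_0}^{(j)}$ is a distribution of order $j+1$, the order hypotheses translate into vanishing constraints on the scalars in (\ref{jjjk}, \ref{jjjkkk}): ``$\text{ord}\,a_i\le k$'' forces $A_{ij}=0$ for all $j\ge k$, and ``$\text{ord}\,b_i\le k$'' forces $B_{ij}=0$ for all $j\ge k$. For $i=0$ this yields $A_{00}=A_{01}=B_{03}=0$ in case (a); $A_{01}=B_{02}=B_{03}=0$ in case (b); and $B_{01}=B_{02}=B_{03}=0$ in case (c). Substituting these into (\ref{eee}) I expect that in case (a) all entries of ${\bf B}(\xi_0)$ strictly below the anti-diagonal $\{(i,j):i+j=5\}$ vanish, producing an anti-upper-triangular matrix with anti-diagonal $(a_+, a_+, a, a)$; in case (b) the same structure appears after one Laplace expansion along the last row (whose only surviving entry is $a$); and in case (c) one expands along column $4$ and exploits the fact that the minor attached to the surviving entry $-A_{01}$ has a zero column, so its contribution drops out. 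In every case the computation should yield
$$
\det {\bf B}(\xi_0) \;=\; a(\xi_0)^{2}\, a_+(\xi_0)^{2},
$$
which is nonzero by (C1'') and (C2''). Corollary~\ref{hh} then delivers existence and uniqueness for $x_0<\xi_0$.

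For $i=1$ the computation is entirely symmetric: the analogous order constraints on $A_{1j}, B_{1j}$ produce the same zero pattern in ${\bf A}(\xi_0)$ (see (\ref{ee})), with anti-diagonal now $(a_-, a_-, a, a)$, so that $\det {\bf A}(\xi_0) = a(\xi_0)^2 a_-(\xi_0)^2\ne 0$; Corollary~\ref{hh} again gives the conclusion for $x_0>\xi_0$.

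The main (and essentially only) step requiring care is case (c), where ${\bf B}$ is not literally anti-triangular and one must exploit a zero column in a $3\times 3$ minor in order to see that the contributions of the surviving $A_{01}$ (resp.\ $A_{11}$) entries cancel from the determinant. The first two cases are routine cofactor expansions, and the uniformity of the final answer $a^{2}a_{\pm}^{2}$ across all three cases is the clean structural feature that makes the proof short.
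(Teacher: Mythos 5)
Your proposal is correct and follows essentially the same route as the paper: translate the order hypotheses into the vanishing conditions $A_{ij}=0$, $B_{ij}=0$ exactly as you list them, verify that in each case $\det {\bf B}(\xi_0)=a_+^2(\xi_0)\,a^2(\xi_0)$ (resp.\ $\det {\bf A}(\xi_0)=a_-^2(\xi_0)\,a^2(\xi_0)$), which is nonzero by (C1'') and (C2''), and invoke Corollary \ref{hh}. The only difference is that you spell out the cofactor expansions, which the paper omits; your determinant values and the handling of the $A_{01}$ (resp.\ $A_{11}$) term in case (c) check out.
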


\begin{proof}
This Corollary follows directly from the previous one. 
The conditions (a)--(c) are equivalent to:
\begin{itemize}
\item [(a')] $A_{ij}=0,\, j=0,1$ and $B_{i3}=0$
\item [(b')] $A_{i1}=0$ and $B_{i2}=B_{i3}=0$
\item [(c')] $B_{i1}=B_{i2}=B_{i3}=0$
\end{itemize}
For $i=0$ any of these conditions implies $\det \,{\bf B}(\xi_0)=a_+^2(\xi_0) a^2(\xi_0) \not=0$ (the inequality follows from the conditions (C1'') and (C2'') of Theorem \ref{jjj}). From the previous Corollary we conclude that the solution of the BVP exists and is unique for arbitrary boundary conditions given at $x_0 < \xi_0$. An equivalent statement holds for $i=1$. In this case $\det \,{\bf A}(\xi_0)=a_-^2(\xi_0) a^2(\xi_0) \not=0$.


\end{proof}

\section {Vibration modes of the Euler--Bernoulli beam with a crack}

In this section we use the previous results to study the vibration modes of an EBB with discontinuous flexural stiffness and a structural crack. One interesting feature of our formulation is that it is well-defined even if the crack is located at points where the flexural stiffness is discontinuous.   

\subsection{The model}

We consider the EBB boundary value problem:
\begin{equation}\label{hh7-0}
\left[a(x) u''(x,t)\right]'' + m\ddot{u}(x,t)=0,\quad x\in\left[0,1\right],\quad t\geq0
\end{equation}
with pinned-pinned (PP) boundary conditions (\ref{nnn})
or clamped-clamped (CC) boundary conditions (\ref{CCbc}).
Here $u$ is the transverse displacement, $m$ is the (constant) mass per unit length and 
\begin{equation} \label{ll-0}
a(x)  =  A( H (\xi_0-x) -\lambda_0 \delta_{\xi_0})+ 
 kA( H (x-\xi_0) -\lambda_1\delta_{\xi_0}) 
\end{equation}
where $A >0$ and $kA>0$ are the flexural stiffness  constants  in the sections $[0,\xi_0)$ and $(\xi_0,1]$, respectively ($0< \xi_0<1$). Moreover, $\lambda_0,\lambda_1$ are dimension--less parameters related to the intensity of the crack at the left and right sides of the junction point. Following \cite{BC07,Cad09,Cad12}, the crack at $x=\xi_0$ is modelled by a flexural stiffness displaying a Dirac delta function at $\xi_0$. If the beam is uniform (k=1) the crack is modelled by $A (\lambda_0+\lambda_1) \delta_{\xi_0}$ and its intensity (and all subsequent results) are dependent of $\lambda_0+\lambda_1$ only, and not of the individual values of $\lambda_0$ and $\lambda_1$. An interesting question is whether this is also the case if the beam is not uniform.

Equation (\ref{hh7-0}) with coefficients (\ref{ll-0}) does not in general display smooth solutions, and is not well-defined for non-smooth ones either. Following the approach of the previous section, we formulate the system using the more general equation (\ref{21}):
\begin{equation} \label{hh7}
\left[a_0(x)* u''(x,t)+ u''(x,t)*a_1(x)\right]'' + m\ddot{u}(x,t)=0,\,x\in\left[0,1\right],\, t\geq 0
\end{equation}
where the coefficients $a_0,a_1$ should satisfy $a_0+a_1=a$ (cf.(\ref{ll-0})) while modelling the correct inner structure of the crack, i.e. with intensities $\lambda_0,\lambda_1$ at $\xi_0^-$ and $\xi_0^+$, respectively. Using (\ref{LDE1}) as a guideline, we set: 
\begin{eqnarray}\label{ll-novo}
a_0(x) & = & A H(\xi_0-x) -kA\lambda_1 \delta_{\xi_0} \\
a_1(x) &=& kA H(x-\xi_0) -A\lambda_0\delta_{\xi_0} \nonumber
\end{eqnarray}
so that in (\ref{LDE1}) we have:
\begin{eqnarray}
a_{0\epsilon}(x-\epsilon) + a_{1\epsilon}(x+\epsilon) &=& A \left[ H^\epsilon((\xi_0+\epsilon)-x) - \lambda_0\delta^\epsilon_{\xi_0-\epsilon}  \right] \nonumber \\
&& + kA \left[ H^\epsilon(x-(\xi_0-\epsilon)) - \lambda_1\delta^\epsilon_{\xi_0+\epsilon} \right] \nonumber 
\end{eqnarray}
where $H^\epsilon$ and $\delta_a^\epsilon$ are suitable one-parameter families of smooth functions converging in $\DO'$ (as $\epsilon \to 0^+$) to $H$ and $\delta_a$, respectively (cf. \cite{DJP20}). We also have
$
\lim_{\epsilon \to 0^+} 
a_{0\epsilon}(x-\epsilon) + a_{1\epsilon}(x+\epsilon)=a(x) \, 
$ 
in $\DO'$, as it should. 

There are, of course, other choices of the coefficients $a_0$ and $a_1$, still satisfying $a_0+a_1=a$, and which can also be suitable to model this specific system. Our particular choice aims at yielding the simplest formulation. We will discuss other possibilities in section 4.5.

The separable solutions of (\ref{hh7}) are of the form:
\begin{equation} \label{Ssolutionsn}
u_n(x,t)=\phi_n(x)T_n(t)\quad , \quad n=1,2,..
\end{equation}
and for PP (\ref{nnn}) or CC (\ref{CCbc}) boundary conditions they form a discrete set. Substituting (\ref{Ssolutionsn}) in (\ref{hh7}), we easily obtain the eigenvalue equation for the vibration modes of the beam: 
\begin{equation}\label{78p} 
\left[( H (\xi_0-x) -k\lambda_1 \delta_{\xi_0})* \phi_n''(x) + \phi_n''(x) *( k H (x-\xi_0) -\lambda_0\delta_{\xi_0})\right]''-\alpha_n^4 \phi_n(x)=0
\end{equation}
where $\alpha_n$ is a frequency parameter given by
\begin{equation}\label{787}
\alpha_n^4=\frac{w_n^2 m}{A}
\end{equation}
and $w_n$ is the natural frequency of the $n$th vibration mode.

\subsection{General solutions and interface conditions}

Equation (\ref{78p}) is of the general form (\ref{12ii}). Moreover, the coefficients $a_0,a_1$ satisfy the conditions (C1'') and (C2'') of Theorem \ref{jjj}. Hence, its solutions are of the  form (cf. eq.(\ref{Solution1})):
\begin{equation} \label{789}
\phi_n (x)=H (\xi_0-x) \phi_{1n} (x) + H (x-\xi_0)\phi_{2n} (x)
\end{equation}
where the vibration modes at the left and right sides of the crack ($\phi_{1n}$ and $\phi_{2n}$, respectively) satisfy (cf. (\ref{ODESReg}), and note from (\ref{78p}) that $a_-=1,\, a_+=k,\, b_\pm=m/A$):
\begin{equation}\label{8p}
\phi_{1n}^{(4)}(x)-\alpha_n^4 \phi_{1n} (x)=0 \qquad \mbox{and} \qquad k\phi_{2n}^{(4)}(x)-\alpha_n^4 \phi_{2n} (x)=0
\end{equation}
on $[0,\xi_0]$ and $[\xi_0,1]$, respectively. 


The solutions of (\ref {8p}) are easily found by first solving the characteristic equations:
$$
s_1^4-\alpha_n^4=0 \qquad,\qquad k s_2^4-\alpha_n^4=0\, ,
$$
the roots of which are 
$$
s_1=\pm\alpha_n,\quad s_1=\pm i\alpha_n\qquad \mbox{and} \qquad s_2=\pm\frac{\alpha_n}{\sqrt[4]{k}},\quad s_2=\pm i\frac{\alpha_n}{\sqrt[4]{k}}\, , 
$$
and thus:
\begin{equation}\label{v1}
\phi_{1n} (x)=A_1\sin(\alpha_n x)+B_1\cos (\alpha_n x)+C_1\sinh (\alpha_n x)+D_1\cosh (\alpha_n x)
\end{equation}
and
\begin{equation}\label{v2}
\phi_{2n}(x)=A_2\sin(\alpha_n\beta x)+B_2\cos (\alpha_n\beta x)+C_2\sinh (\alpha_n\beta x)+D_2\cosh (\alpha_n\beta x)
\end{equation}
where $A_i,B_i,C_i,D_i,\, i=1,2$ are integration constants (which may vary with $n$), $\alpha_n$ was defined in (\ref{787}) and
\begin{equation}\label{ggj}
\beta= k^{-1/4}~.
\end{equation}

It also follows from Theorem \ref{jjj} that the solutions (\ref{789}) satisfy an interface condition at $x=\xi_0$ (cf. eqs.(\ref{ICond},\,\ref{ee},\,\ref{eee})):
$${\bf A}(\xi_0) \overline{\phi_{1n}(\xi_0)} = {\bf B} (\xi_0)\overline{\phi_{2n}(\xi_0)}$$
where 
$
\overline{\phi_{in}}= \left( 
  {{\phi_{in}}},\,{{\phi'_{in}}},\,{{\phi''_{in}}},\,{{\phi'''_{in}}} \right)^T, \, i=1,2;
$
and the matrices ${\bf A}(\xi_0)$, ${\bf B}(\xi_0)$ are given explicitly by:
\begin{equation*} 
{\bf A}(\xi_0)=\left( {\begin{array}{*{20}{c}}
  0&0&0&1\\ 
  0&0&1&0\\ 
	0&1+k&\lambda_0&0\\ 
  1+k&0&0&0
\end{array}} \right),\;  {\bf B}(\xi_0)=\left( {\begin{array}{*{20}{c}}
  0&0&0&k\\ 
  0&0&k&0\\ 
	0&1+k&-k\lambda_1&0\\ 
  1+k&0&0&0
\end{array}} \right) ~.
\end{equation*}
Note that, from (\ref{78p}), we have: $a_{0-}=1$, $a_{0+}=0$, $a_{1-}=0$, $a_{1+}=k$, $A_{00}=-k\lambda_1$, $A_{10}=-\lambda_0$, $A_{i1}=B_{ij}=0,\,i=0,1;\,j=0,1,2,3$.

The interface condition can be easily re-written in the form:
\begin{equation} \label{yu}
\left\{\begin{array}{ll}
   \phi'''_{1n} (\xi_0)-k\phi'''_{2n} (\xi_0)=0\\
   \phi''_{1n} (\xi_0)-k\phi''_{2n}  (\xi_0)=0\\
   \phi'_{1n} (\xi_0)+S\phi''_{1n}  (\xi_0)- \phi'_{2n}  (\xi_0)=0\\
   \phi_{1n} (\xi_0)-\phi_{2n} (\xi_0)=0 
\end{array}\right. 
\end{equation} 
where
\begin{equation}\label{uy}
S=\frac{\lambda_0+\lambda_1}{k+1} ~.
\end{equation} 
We remark that (\ref{yu}\,,\ref{uy}) are only dependent of the value of $\lambda_0+\lambda_1$ and not of the individual values of $\lambda_0,\lambda_1$. Hence, in this model, only the total intensity of the crack affects the dynamical behaviour of the beam. We will see, however, that this is not the case for other choices of the coefficients $a_0,a_1$.

Finally, and since $\det {\bf A}(\xi_0)=(1+k)^2 \not=0$ and also $\det {\bf B}(\xi_0)=k^2 (1+k)^2 \not=0$, it follows from Corollary \ref{hh} that the solution of eq.(\ref{78p}) exists and is unique for arbitrary boundary conditions $\overline{\phi (x_0)}= \overline{C}$ at $x_0 \not= \xi_0$. To be more specific, this means that the four boundary conditions $\overline{\phi (x_0)}= \overline{C}$ and the four interface conditions (\ref{yu}) completely fix the eight integration constants in eqs.(\ref{v1},\,\ref{v2}). 


\subsection {Boundary conditions}

We consider two cases: pinned-pinned (PP) boundary conditions and clamped-clamped (CC) boundary conditions.

\subsubsection{Simply supported  (pinned-pinned) beam}\label{subsubsec:pp}

The imposition of PP boun--dary conditions (\ref{nnn}) on the solutions (\ref{789}) yields:
\begin{equation} \label{17}
\phi_{1n}(0)=0; \quad \phi''_{1n}(0)=0
\end{equation}
\begin{equation} \label{18}
\phi_{2n}(1)=0;\quad \phi''_{2n}(1)=0 ~.
\end{equation}
Substituting (\ref{17}) in (\ref{v1}), we get:
\begin{equation} \label{199}
B_1=D_1=0 ~.
\end{equation}
If we impose the boundary conditions (\ref{18}) and the interface conditions (\ref{yu}) on (\ref{v1},\ref{v2}) and take into account (\ref{199}), we get a linear and homogeneous system of six equations for the six unknowns $A_1,C_1,A_2,B_2,C_2$ and $D_2$. This system can be written as:
\begin{equation}\label{CondPP}
{\bf M}_{PP} \, {\bf X}=0
\end{equation}
where 
$
{\bf X}=(A_1,C_1,A_2,B_2,C_2,D_2)^T
$ and
\begin{equation} \label{19}
{\bf M}_{PP}= \left( {\begin{array}{*{20}{c}}
  0&0&{{s_1}}&{{c_1}}&{s{h_1}}&{c{h_1}} \\ 
  0&0&{ - {s_1}}&{ - {c_1}}&{s{h_1}}&{c{h_1}} \\ 
  {{s_3}}&{s{h_3}}&{ - {s_2}}&{ - {c_2}}&{ - s{h_2}}&{ - c{h_2}} \\ 
   {f_1}&{f_2}&\beta{c_2}&{ - \beta{s_2}}&\beta c{h_2}&{ \beta s{h_2}} \\ 
  { - {s_3}}&{s{h_3}}&{\sqrt[]{k}{s_2}}&{  \sqrt[]{k}{c_2}}&{ - \sqrt[]{k}s{h_2}}&{ - \sqrt[]{k}c{h_2}} \\ 
  { - {c_3}}&{c{h_3}}&{\sqrt[4]{k}{c_2}}&{ - \sqrt[4]{k}{s_2}}&{ - \sqrt[4]{k}c{h_2}}&{ - \sqrt[4]{k}s{h_2}} 
\end{array}} \right) 
\end{equation}
Here, 
\begin{eqnarray}\label{gg}
&& s_i=\sin(\xi_{0i}\alpha_n\beta_i),\quad c_i=\cos(\xi_{0i}\alpha_n\beta_i)\\
\nonumber &&sh_i=\sinh(\xi_{0i}\alpha_n\beta_i),\quad ch_i=\cosh(\xi_{0i}\alpha_n\beta_i),\quad i=1,2,3
\end{eqnarray}
where $\xi_{01}=1$, $\xi_{0i}=\xi_0$, $i=2,3$, $\beta_{3}=1$, $\beta_{i}=\beta$, $i=1,2$.
 Moreover:
\begin{equation} \label{SS}
f_1=- {c_3} + \alpha_n S {s_3},\,\,\,f_2=- c{h_3} - \alpha_n S{sh_3}
\end{equation}
and $\beta$, $S$ are defined in (\ref{ggj}) and (\ref{uy}), respectively.

The homogeneous system (\ref{CondPP}) has nontrivial solutions, if and only if:
\begin{equation}\label{detEq}
\det {\bf M}_{PP} =0
\end{equation}
yielding the characteristic equation for the frequency parameters $\alpha_n$. Solving for $\alpha_n$ and substituting back into (\ref{CondPP}), one can then obtain the integration constants, and thus the explicit form of the vibration modes. Finally, we notice that the frequency $\alpha_n$, solution of \eqref{detEq}, is in general dependent of $\xi_0,k$ and $S$, but not of $\lambda_1$ and $\lambda_2$ individually. We will study these relations numerically.


\subsubsection{ Clamped-clamped beam (CC)} \label{subsubsec:cc}

Following the same steps as for the previous case, we impose the CC boundary conditions (\ref{CCbc}) on the solutions (\ref{789}) and get:
\begin{equation} \label{23}
\phi_{1n}(0)=0; \quad \phi'_{1n}(0)=0
\end{equation}
\begin{equation} \label{24}
\phi_{2n}(1)=0;\quad \phi'_{2n}(1)=0
\end{equation}

Substituting the conditions (\ref{23}) in (\ref{v1}), yields:
\begin{equation} \label{233}
A_1=-C_1 \text{ and } B_1=-D_1
\end{equation}
As before, if we impose the boundary conditions (\ref{24}) and the interface conditions (\ref{yu}) on (\ref{v1},\ref{v2}) and take into account (\ref{233}), we get a linear and homogeneous system of six equations for the six unknowns $C_1,D_1,A_2,B_2,$ $C_2,D_2$. This system can be written as:
\begin{equation}\label{CondCC}
{\bf M}_{CC} \, {\bf X}=0
\end{equation}
where 
$
{\bf X}=(C_1,D_1,A_2,B_2,C_2,D_2)^T ~,
$
\begin{equation} \label{25}
{\bf M}_{CC}= \left( {\begin{array}{*{20}{c}}
  0&0&{{c_1}}&{{-s_1}}&{c{h_1}}&{s{h_1}} \\ 
  0&0&{  {s_1}}&{  {c_1}}&{s{h_1}}&{c{h_1}} \\ 
  {{-s_3+ sh_3}}&{-c_3 + ch_3}&{ - {s_2}}&{ - {c_2}}&{ - s{h_2}}&{ - c{h_2}} \\ 
  { g_1}&{ g_2}&{-\beta{c_2}}&{  \beta{s_2}}&{-\beta c{h_2}}&{-\beta s{h_2}} \\ 
  {  {s_3+sh_3}}&{c_3+ch_3}&{\sqrt{k}{s_2}}&{  \sqrt{k}{c_2}}&{ - \sqrt{k}s{h_2}}&{ - \sqrt{k}c{h_2}} \\ 
  { c_3+ch_3}&{-s_3+sh_3}&{\sqrt[4]{k}{c_2}}&{ - \sqrt[4]{k}{s_2}}&{ - \sqrt[4]{k}c{h_2}}&{ - \sqrt[4]{k}s{h_2}} 
\end{array}} \right)~,
\end{equation}
$s_i,c_i,sh_i,ch_i,\,\,i=1,2,3$ are defined in (\ref{gg}), and
\begin{eqnarray*}\label{a}
g_1=- {c_3}+ch_3 + S\alpha_n (s_3+sh_3),\,\,\,\,\,\,g_2=s_3+s{h_3} + S \alpha_n (c_3+ch_3) ~.
\end{eqnarray*} 
The homogeneous system (\ref{CondCC}) has nontrivial solutions, if and only if:
\begin{equation}\label{detEqh}
\det {\bf M}_{CC} =0
\end{equation}
yielding the characteristic equation for the frequency parameters $\alpha_n$. As for the case of PP boundary conditions, if we substitute these frequencies back into (\ref{CondCC}), we can obtain the integration constants, and thus the explicit form of the vibration modes.

\subsection{Numerical results}
We now solve (\ref{detEq}) and (\ref{detEqh}) numerically and determine the values of the frequency parameters $\alpha_n$ for the PP and CC beams. The results for several different cases are displayed in Figures 1 to 3 below. 




Figure 1 displays the first, second and third frequencies $ \alpha_i, i=1,2,3$ of the CC and PP (uniform and non-uniform) beams as a function of the crack intensity parameter $\lambda = \lambda_0=\lambda_1 $ (the intensities of the crack at the left and right sides of the junction point are assumed identical). The junction point and the crack are both localized at the midpoint of the beam.
It is interesting to note that for the CC and PP beams (uniform and non-uniform) the second frequency is independent of the value of $ \lambda$.

\begin{figure}[h]
\center\includegraphics [scale=0.6] {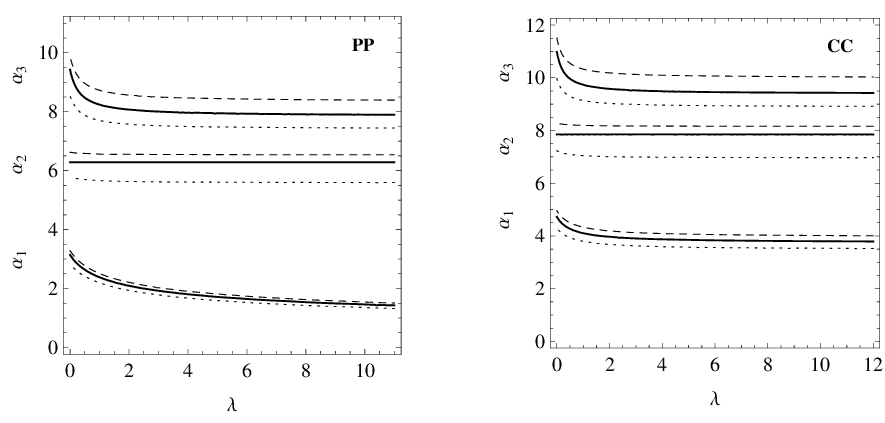}
\end{figure}
\footnotesize {Fig.1: 1st, 2nd and 3rd frequencies of the CC and PP (uniform and non-uniform) beams versus the intensity $\lambda=\lambda_0=\lambda_1$ of a crack localized (together with a junction point) at the midpoint of the beam. The uniform cases ($k = 1$) are represented by thick lines, the non-uniform beams by dotted lines (for $k = 0.5 $) and dashed lines (for $k = 1.5 $).} 
\vspace{0.3cm}

\normalsize

Figure 2 shows the first, second and third frequencies of the CC and PP beams as a function of $k$. A single crack of three possible intensities, and a junction point are both localized at the midpoint of the beam ($\xi_0=1/2$).





\begin{figure}[h]
\center\includegraphics [scale=0.6]{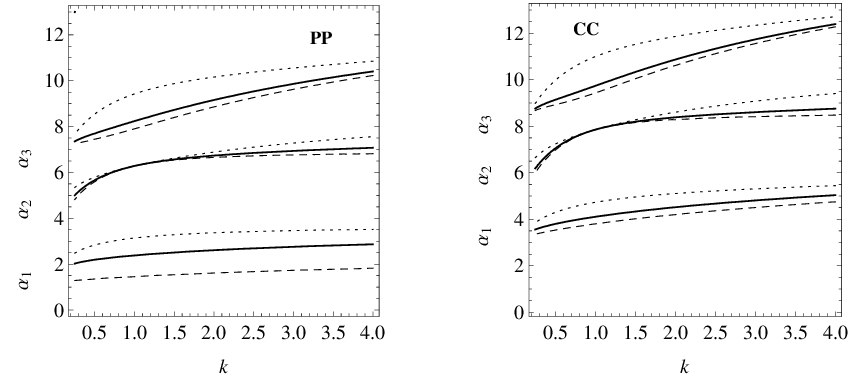}
\end{figure}
\footnotesize {Fig.2: 1st, 2nd and 3rd frequencies of the CC and PP beams versus $k$. The beams have a single crack, of intensity $\lambda=\lambda_0=\lambda_1$, which is localized at the junction point in the midpoint of the beam ($\xi_0=1/2$). Three cases are considered: $\lambda=0$ (dotted lines), $\lambda=1$ (thick lines) and $\lambda=10$ (dashed lines).} 
\vspace{0.3cm}

\normalsize

Figure 3 displays the first, second and third frequencies of the CC and PP (uniform and non-uniform) beams as a function of the crack position $\xi_0$. In all cases the crack intensity is $\lambda_0=\lambda_1=2$.

\begin{figure}[h]
\center\includegraphics [scale=0.7]{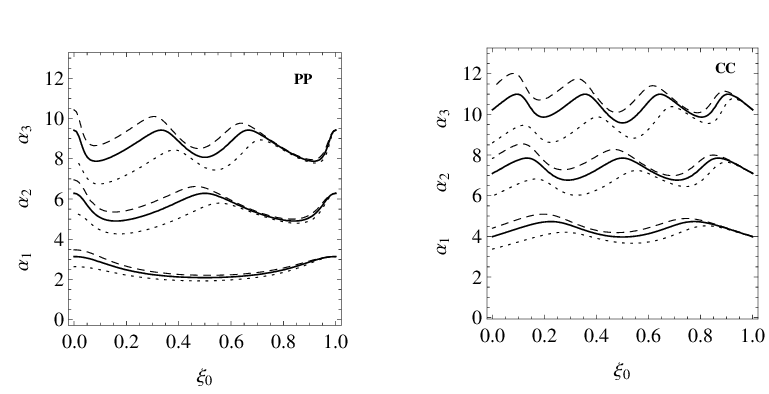}
\end{figure}
\footnotesize {Fig.3: 1st, 2nd and 3rd frequencies of the CC and PP (uniform and non-uniform) beams versus the crack position (which again coincides with the junction point). The uniform beams ($ k = 1 $) are represented by the thick lines and the non-uniform beams by dotted lines ($k = 0.5 $) and dashed lines ($k = 1.5$). The crack intensity is $\lambda_0=\lambda_1=2$.} 
\vspace{0.3cm}

\normalsize

\subsection{Discussion and outlook}

The graphs of the 1st frequency for uniform beams shown in Figures 1 and 3 are remarkably similar to the ones given in Figures 1 and 2a of \cite{Cad09} (the other graphs concern different systems). Note that the damage parameter $\lambda$ that is used in \cite{Cad09} is related to the crack intensity $\gamma$ by (we are using the notation of \cite{Cad09}, cf.[eq.(C19), \cite{Cad09}])
$$
\lambda= \frac{\gamma}{1-A \gamma},
$$ 
and so, in our case, it satisfies $\lambda=\gamma$. This is because $A$ is a parameter related to the product of two Dirac deltas with the same support (cf. [eq.(C18), \cite{Cad09}]), which for our product $*$ is given by $\delta_{\xi_0} *\delta_{\xi_0} =0 \Longrightarrow A=0$ (in \cite{Cad09} the authors used the Bagarello product \cite{Bag95,Bag02}). 

Another interesting feature of the model (\ref{hh7},~\ref{ll-novo}) is that the characteristic frequencies of the beam are only dependent of the total intensity of the crack and not of its inner structure (i.e. of the particular left and right intensities at the junction point). This is a consequence of our choice of coefficients (\ref{ll-novo}), and not a general property of the model \eqref{hh7}.   
 If we consider a more general crack inner structure, by setting:
$$
a_i(x)=\tfrac{1}{2}\left[ AH(\xi_0-x) + kA H(x-\xi_0)\right] -A \lambda_{i0} \delta_{\xi_0}-kA \lambda_{i1} \delta_{\xi_0} \quad i=0,1
$$
where the new parameters $\lambda_{ij}$, $i,j=0,1$ satisfy $\lambda_{00}+\lambda_{10}=\lambda_0$ and $\lambda_{01}+\lambda_{11}=\lambda_1$, then the relation $a_0(x)+a_1(x)=a(x)$ still holds (cf.(\ref{ll-0})). However, it is now possible to choose the parameters $\lambda_{ij}$ in such a way that the specific left and right split of the crack intensity does affect the characteristic frequencies. This model is more general, and may be interesting for fine-tuning the description of beams with cracks at junction points.








\subsection*{Declarations}

\subsubsection*{Funding}

The work of Cristina Jorge was supported
by the PhD grant SFRH/BD/85839/2016 of the Portuguese Science
Foundation.

\subsubsection*{Competing interests}

The authors have no relevant financial or non-financial interests to disclose.

\subsubsection*{Data availability statement} 

Data sharing not applicable to this article as no datasets were generated or analysed during the current study.

\vspace{0cm}

***************************************************************

\textbf{Author's addresses:}

\begin{itemize}
\item \textbf{Nuno Costa Dias}{\footnote{Corresponding author}} and \textbf{Jo\~ao Nuno Prata:
}Escola Superior N\'autica Infante D. Henrique. Av. Eng.
Bonneville Franco, 2770-058 Pa\c{c}o d'Arcos, Portugal and Grupo
de F\'{\i}sica Matem\'{a}tica, Universidade de Lisboa, Av. Prof.
Gama Pinto 2, 1649-003 Lisboa, Portugal

\item \textbf{Cristina Jorge}: Departamento de Matem\'{a}tica.
Universidade Lus\'{o}--fona de Humanidades e Tecnologias. Av. Campo
Grande, 376, 1749-024 Lisboa, Portugal and Grupo de F\'{\i}sica
Matem\'{a}tica, Universidade de Lisboa, Av. Prof. Gama Pinto 2,
1649-003 Lisboa, Portugal

\end{itemize}

\vspace{-0.1cm}

\small

{\it E-mail address} (NCD): ncdias@meo.pt

{\it E-mail address} (CJ): cristina.goncalves.jorge@gmail.com

{\it E-mail address} (JNP): joao.prata@mail.telepac.pt

\end{document}